\newtheorem{theorem}{Theorem}[section]
\newtheorem{lemma}[theorem]{Lemma}
\newtheorem{proposition}[theorem]{Proposition}
\newtheorem{corollary}[theorem]{Corollary}
\begin{document}
{
    \catcode`\@=11
    \gdef\trace{\mathop{\operator@font Tr}\nolimits}
    \gdef\cov{\mathop{\operator@font Cov}\nolimits}
    \gdef\var{\mathop{\operator@font Var}\nolimits}
    \gdef\E{\mathop{\operator@font \mathbb{E}}\nolimits}
    \gdef\L{\mathop{\operator@font \mathcal{L}}\nolimits}
}
\title{Continuous Data Assimilation with Stochastically Noisy Data}
\author{Hakima Bessaih,\footnote{University of Wyoming, Department of
    Mathematics, Dept. 3036, 1000 East University Avenue, Laramie WY
    82071, USA; {\it email:}{\tt\ bessaih@uwyo.edu}} \,\, Eric
  Olson\footnote{Department of Mathematics and Statistics, University
    of Nevada, Reno, NV 89557, USA. {\it email:}{\tt\
      ejolson@unr.edu}} \,\, and \,Edriss S.  Titi\footnote{Department
    of Computer Science and Applied Mathematics, Weizmann Institute of
    Science, Rehovot 76100, Israel; {\it email:}{\tt\
      edriss.titi@weizmann.ac.il}. ALSO: Department of Mathematics and
    Department of Mechanical and Aerospace Engineering, The University
    of California, Irvine, CA 92697, USA; {\it email:}{\tt\
      etiti@math.uci.edu}}}

\date{June 5th, 2014}
\maketitle
\centerline{\it Dedicated to Professor Ciprian Foias on the occasion of his $80^{th}$ birthday.}

\begin{abstract}
  We analyze the performance of a data-assimilation algorithm based on a
  linear feedback control when used with observational data that
  contains measurement errors.  Our model problem consists of dynamics
  governed by the two-dimension incompressible Navier--Stokes
  equations, observational measurements given by finite volume
  elements or nodal points of the velocity field and measurement
  errors which are represented by stochastic noise.  Under these
  assumptions, the data-assimilation algorithm consists of a system of
  stochastically forced Navier--Stokes equations.  The main result of
  this paper provides explicit conditions on the observation density (resolution) which guarantee explicit
  asymptotic bounds, as the time tends to infinity,  on the error between the approximate solution and the actual solutions which is corresponding to these measurements, in terms of  the variance of the noise in the measurements. Specifically, such bounds are given for the the limit supremum, as the time tends to infinity,
  of the expected value of the $L^2$-norm and of the $H^1$ Sobolev norm of the difference
  between the approximating solution and the actual solution.
  Moreover, results on the average time error in mean are stated.
  \end{abstract}
{\bf Keywords:} Determining modes, volumes elements and nodes, continuous data assimilation, downscaling,
Navier-Stokes equations, stochastic PDEs,
signal synchronization. \\
\\
\\
{\bf Mathematics Subject Classification 2000}: Primary 35Q30,
60H15, 60H30; Secondary 93C20, 37C50, 76B75, 34D06.

\section{Introduction}

Data assimilation is a process by which
a time series of observational data for a physical system
is used along with the
knowledge about the physics which govern the dynamics
to obtain an improved estimate of the current state of the system.
Applications of data assimilation
arise in many fields of geosciences, perhaps most importantly in
weather forecasting and hydrology.  The classical method of continuous
data assimilation, see, {\it e.g.}, Daley \cite{daley1991}, is to insert
observational measurements directly into a computer model as the
latter is being integrated in time.

A new approach, inspired by ideas
from control theory \cite{azouani2013}, has been proposed in
\cite{olson2013} that consists in introducing a feedback control term
that forces the approximating solution obtained by
data assimilation toward the
reference solution that is corresponding to the observations (see also \cite{FJKT1,FJKT1} for other applications).
This approach
admits a general framework of interpolant operators
which include operators arising from local volume averages and
pointwise nodal measurements.
Special attention is given to nodal measurements because they
may be likened to the data collected by an array of weather-vane
anemometers placed throughout the physical domain.
This is unlike previous rigorous work
\cite{browning1998,olson2011,henshaw2003,Korn,olson2003} where the
observed data is assumed to be the lower Fourier modes of the
reference unknown full solution.
The method of data assimilation introduced in \cite{olson2013}
extends equally to all
dissipative dynamical systems and relies on the fact that such
dynamical systems possess finite number of determining parameters,
such as determining modes, nodes and local volume averages,
see, for example,
\cite{cockburn1996,foias2001,FP,FT1,FT2,FTi,Holst-Titi,jones1992,jones1993} and
references therein.

With the exception of
Bl\"omker and coauthors~\cite{blomker2012}
for the 3DVAR Gaussian filter,
previous theoretical work
assumed that the observational measurements
are error free.
In this paper, we extend the approach of \cite{olson2013} to the case
where the observations are contaminated with random errors.
This allows us to treat measurement errors for
general interpolant observables and, in particular, when the
measurements are nodal values with random noise.
In this way our analysis may also be seen to extend the work
of \cite{blomker2012} from Fourier mode measurements to
general interpolant observables.

The method of data assimilation studied in this paper can be
described mathematically as follows.
Let $U(t)$ be a solution trajectory lying on the global attractor
of a known dissipative continuous dynamical system and
let $u(t)$ be the approximating solution
obtained from data assimilation of noisy observational measurements
of $U(t)$.
Assume that the dynamics of $U$ are governed by
an evolution equation of the form
\begin{plain}\begin{equation}
  {dU\over dt} = F(U)
  \label{E1}
\end{equation}\end{plain}%
with unknown initial condition
$U_{0}\in V$ at time $t_{0}$.  Here $F\colon V\to H$, where
$V$ and $H$ are suitable function spaces.

Denote by ${\cal O}_h(U(t))$, for $t\ge 0$,
the exact observational measurements without error of the true
solution $U$ at time $t$.
For two-dimensional physical domains we assume
${\cal O}_h\colon V\to \mathbb{R}^D$ to be linear operator
where $D$ is of the order $(L/h)^2$ and $L$ is a typical
large length scale of the physical domain of interest
and $h$ is the observation density or resolution.
Denote by $R_{h}(U(t))$ the interpolation
of the observational data, namely,
$$
	R_h(U(t))=\L_h\circ{\cal O}_h(U(t)),$$
where $\L_h\colon\mathbb{R}^D\to H$ is linear operator.
Note that $R_h$ need not be a projection
nor does its range
need be included in the domain of $F$.
Further assumptions on the general interpolant
observable $R_h$ are given in \eqref{R1} and \eqref{R2} below.

In the absence of measurement errors the data-assimilation
method proposed in \cite{olson2013} would construct
the approximating solution $u$ from the interpolant
observables $R_h(U(t))$ dynamically as the solution to
\begin{plain}\begin{equation}\label{noerror}
	{du\over dt}=F(u)-\mu(R_h(u)-R_h(U)),
\end{equation}\end{plain}%
with arbitrary initial condition $u(0)=u_0$.
Here $\mu>0$ is a relaxation parameter (nudging), whose value will
be determined later, which forces the coarse spatial scales
of $u$, {\it i.e.}, $R_h(u)$, toward those of the observed data,
{\it i.e.}, $R_h(U)$.

Suppose the exact measurements ${\cal O}_h(U(t))$
are subjected to some random errors.
Thus, the only observations available for
data assimilation
are the noisy observations $\tilde {\cal O}_h(U(t))$ given by
\begin{equation}\label{noisyO}
	\tilde {\cal O}_h(U(t)) = {\cal O}_h(U(t)) + {\cal E}(t),
\end{equation}
where ${\cal E}\colon [0,\infty)\to \mathbb{R}^D$ represents
the measurement error, for example, due to instrumental errors.
That is, in reality the actual interpolated
measurements of $U(t)$ contain random errors and are given
by
\begin{plain}$$\eqalign{
	\tilde R_h(U(t))
		&=\L_h(\tilde {\cal O}_h(U(t)))\cr
		&=\L_h({\cal O}_h(U(t))+\L_h({\cal E}(t))
		=R_h(U(t)) + \xi(t),
}$$\end{plain}%
where the random vector $\xi(t)$ lies in the range of the
interpolant operator $R_h$.

We will assume that the components of the random errors ${\cal E}(t)$
are independent identically distributed of Gaussian type.
In particular, the random error $\xi(t)$ will be
expressed in terms of a finite-dimensional Wiener process $W$,
white noise in time with an appropriate covariance operator. The
precise assumptions will be given in the following sections. We observe that these
results could be generalized to other kinds of
processes such as a Levy noise.

In this paper we examine the data-assimilation method given
by equation \eqref{noerror}
when the noise-free
interpolant observable $R_h(U(t))$ is replaced by $\tilde R_h(U(t))$.
In this case, our algorithm for
constructing $u(t)$ from the observational measurements
$\tilde{\cal O}_h(U(t))$ is given by the stochastic
evolution equation
\begin{equation}
  du = \left(F(u) -\mu R_h(u) +\mu R_h(U)\right)dt+\mu\xi dt,
  \label{approxeqn}
\end{equation}
with arbitrary initial condition $u(0)=u_0$.

The two-dimensional incompressible Navier--Stokes equations, subject
to period boundary conditions, provide a concrete example of a
dissipative dynamical system, which we will use as a
model problem for our analysis.
We find explicit conditions on the observation
density or resolution $h$ and relaxation parameter $\mu$
which guarantees that the
resulting approximate solution $u(t)$ converges, as
$t\to\infty$, in some sense, to the exact reference solution $U(t)$
within an error that is determined
by the observation density $h$, the relaxation parameter $\mu$ and
the variance of the error in the measurements.
It is worth mentioning that the application
of algorithm \eqref{approxeqn} to recover solutions to fluid
flow problems provides a concrete and justifiable reason
for investigating stochastically forced equations such as the
Navier-Stokes equations.

In the remainder of this work, the reference solution $U$ will be determined by
the two-dimensional incompressible Navier--Stokes equations
\begin{plain}\begin{equation}\label{NSE}
\left\{\eqalign{
  \frac{\partial U}{\partial t}- \nu \Delta U + (U \cdot\nabla)U
  =-\nabla p +f \cr
  \nabla \cdot U =0,
\cr
}\right.\end{equation}\end{plain}%
which describe the motion of an incompressible fluid in
$\mathbb{R}^2$.
We assume periodic boundary conditions with fundamental domain
$\mathcal{D}=[0,L]^{2}$ and take the initial condition
$U(0,x)=U_{0}(x)$ and the
body forcing $f=f(x)$ to be an $L$-periodic function with zero
spatial average.
The kinematic viscosity $\nu>0$ is assumed to be known.
The unknowns are the velocity vector $U=U(t,x)$
and scalar pressure $p=p(t,x)$.
We observe that \eqref{NSE} preserves the $L$-periodicity
and zero spatial average of the initial condition.
Thus $\int_\mathcal{D} U(t,x) \,dx=0$,
for all $t \ge 0$, provided $\int_\mathcal{D} f(x) \, dx=
\int_\mathcal{D} U_0(x) \,dx=0$, which we will assume throughout this
paper.

For notational convenience we will denote $L^1_{\rm per}$
as simply $L^1$, and similarly $L^2_{\rm per}$ by $L^2$
and $H^1_{\rm per}({\cal D})$ by $H^1$.
For $\varphi\in L^1$ we define the average
$$
\langle\varphi\rangle =
	\frac{1}{L^2}\int_{\mathcal{D}}\varphi(x)\, dx,
$$
and for every $\mathcal{Z} \subseteq
L^1$ we denote
$\dot{\mathcal{Z}} =\{\,
\varphi \in \mathcal{Z}: \langle\varphi\rangle=0\,\}$.

In the absence of measurement errors the data assimilation
method given by \eqref{noerror} for the two-dimensional
incompressible Navier-Stokes equations allows the use of two kinds of linear interpolant observables. The first kind are
$R_{h}\colon [\dot{H}^{1}]^2\to [\dot{L}^{2}]^2$,
which satisfy the approximating identity property
\begin{equation}\label{R1}
  \|\varphi-R_{h}(\varphi)\|_{L^{2}}^{2}\leq
		c_1h^{2}\|\varphi\|_{H^{1}}^{2},
\end{equation}
for every $\varphi\in [\dot{H}^1]^2$; and the second kind
of interpolant observables  are $R_{h}\colon [\dot{H}^{2}]^2\to
[\dot{L}^{2}]^2$, which satisfy
\begin{equation}\label{R2}
  \|\varphi-R_{h}(\varphi)\|_{L^{2}}^{2}\leq c_1 h^{2}\|\varphi\|_{H^{1}}^{2}+
   c_2 h^{4}\|\varphi\|_{H^{2}}^{2},
\end{equation}
for every $\varphi\in [\dot{H}^2]^2$.
In the presence of measurement errors this same data assimilation
method becomes the stochastic differential equation \eqref{noerror}
and our analysis needs additional regularity assumptions on $R_h$
for interpolants that satisfy \eqref{R2}.
In particular, we assume the range of $R_h$
is in $[\dot H^1]^2$ for interpolants which satisfy \eqref{R2}.
This does not
result in loss of generality because any interpolant
operator whose range is in $[\dot L^2]^2$ can be
smoothed so its range is in $[\dot H^{s}]^2$, for any
$s >0$.

The orthogonal
projection onto the Fourier modes, with wave numbers~$k$
such that $|k|\le 1/h$, is an example of an interpolant
operator which satisfies both approximation properties \eqref{R1} and \eqref{R2}.
A physically relevant interpolant which
satisfies (\ref{R1}) is given by the volume elements studied in
\cite{jones1992} and \cite{jones1993}, see also \cite{FTi}.
Suppose the observations of volume elements
${\cal O}_h\colon [\dot H^1]^2\to\mathbb{R}^{2N}$
are given by
\begin{plain}\begin{equation}\label{volumes}
{\cal O}_h(\Phi)=(\bar \varphi_1,\bar \varphi_2,\ldots,
		\bar\varphi_{2N})
\quad\hbox{where}\quad
\left[\matrix{\bar \varphi_{2n-1}\cr \bar \varphi_{2n}}\right] =
	{N\over L^2} \int_{Q_n} \Phi(x)\,dx,
\end{equation}\end{plain}%
for $n=1,2,\ldots,N$, where the
domain $\mathcal{D}=[0,L]^2$ has been divided into $N=K^2$
disjoint equal squares $Q_n$ with sides $h=L/K$.
Define $R_h=\L_h\circ {\cal O}_h$, where $\L_h:\mathbb{R}^{2N} \to {[\dot{L}^2(\cal D)]}^2$ with  $\L_h(\zeta)$ is the
$L$-periodic function given by
\begin{plain}\begin{equation}\label{stepL}
	\L_h(\zeta)(x)= \sum_{n=1}^{N} \left[\matrix{\zeta_{2n-1} \cr \zeta_{2n}}\right]
	\Big(\chi_{Q_n}(x)-\frac{h^{2}}{L^{2}}\Big)
\quad\hbox{on}\quad  {\cal D}.
\end{equation}\end{plain}%

As shown in \cite{jones1992} the interpolant $R_h$
satisfies \eqref{R1}, with $c_1=1/6$.
Note there are many other
choices for $\L_h$ that result in interpolant observables based on
volume elements which also satisfy (\ref{R1}).
For example, the appendix of \cite{olson2013}, which will be
summarized in section \ref{subsecerror} below,
presents a smoothed choice for $\L_h$ which results
in an $R_h\colon [\dot H^1]^2\to [\dot H^{2}]^2$ which
also satisfies \eqref{R1}.
In addition, volume elements
generalize to any domain $\mathcal{D}$ on which the Bramble--Hilbert
inequality holds.  An elementary discussion of this inequality in the
context of finite element methods appears in Brenner and Scott
\cite{brennerscott2007}, see also \cite{Ciarlet, Holst-Titi, WM}.

An interpolant observable
$R_h\colon [\dot H^2]^2\to [\dot H^{2}]^2$
which satisfies \eqref{R2} is obtained,
following the ideas of \cite{jones1993},
when the observational measurements are given by nodal measurements
of the velocity.
This corresponds to the data collected from an array of
weather-vane anemometers placed throughout the physical domain.
Suppose the observations of nodes
${\cal O}_h\colon [\dot H^2]^2\to\mathbb{R}^{2N}$ are given by
\begin{plain}\begin{equation}\label{nodes}
	{\cal O}_h(\Phi)= (\varphi_1,\varphi_2,\ldots,\varphi_{2N})
\quad\hbox{where}\quad
	\left[\matrix{\varphi_{2n-1}\cr\varphi_{2n}}\right]=\Phi(x_n),
\end{equation}\end{plain}%
and $x_n\in Q_n$, for $n=1,2,\ldots,N$.  Here
$Q_n$ are, as above, $N=K^2$ disjoint squares with sides
$h=L/K$ such that $\mathcal{D}=\cup_{n=1}^N Q_n$.
Setting $R_h=\L_h\circ {\cal O}_h$, where $\L_h$ is the smoothed
version of \eqref{stepL} given in section \ref{subsecerror},
results in an interpolant which satisfies \eqref{R2}.

The rest of this paper is organized as follows:
section \ref{secprelim}
describes the functional setting for our analysis,
gives the stochastic setting for our measurement errors
and recalls the {\it a-priori\/} estimates for classical solutions
of the two-dimensional incompressible
Navier--Stokes equations that we shall use to obtain our bounds.
Section \ref{secwp} shows the stochastic data assimilation
algorithm given by \eqref{approxeqn} is well-posed.
Section \ref{secmain} states and proves our main
results.
We end with a few concluding remarks.

\section{Preliminaries}\label{secprelim}

The preliminaries have been divided into three subsections.
Subsection \ref{subsecfunct} sets up notation and the functional
setting we will use in our analysis.
Subsection \ref{subsecerror} gives the stochastic
setting for our measurement errors and summarizes the
specific details from \cite{olson2013}
on the general interpolant observables needed for our
analysis.
Subsection \ref{subsecclass}
recalls the theory and {\it a-priori\/} estimates
for classical solutions of the two-dimensional incompressible
Navier--Stokes equations needed for our work.

\def\vl{(\!(}
\def\vr{)\!)}
\subsection{The Functional Setting}\label{subsecfunct}
We describe the functional setting which will be
used to study the Navier-Stokes equations.
We refer to \cite{constantin1988, robinson2001, temam1983, temam2001} for
the main results.
Denote by $\cal V$ all divergence-free $\mathbb{R}^2$
valued $L$-periodic trigonometric polynomials with zero
spatial averages.
Let $H$ and $V$ be the closures of $\mathcal{V}$ in
$[L^{2}]^{2}$ and $[H^1]^{2}$, respectively.
Note that $H$ and $V$ are separable Hilbert spaces with the inner products and
norms inherited from $[{L}^{2}]^{2}$ and
$[{H}^1]^{2}$, respectively.  In particular,
\[
|u|^2_H=\langle u,u\rangle,\, \quad \mbox{ \rm where} \quad \langle
u,v\rangle=\int_{\mathcal D} \big( u(x) \cdot v(x)\big) dx,
\]
and, thanks to the Poincar\'e inequality \eqref{poincare1},
\[
\|u\|_V^2 = \vl u,u\vr, \quad\mbox{\rm where}\quad
	\vl u,v\vr = \int_{\mathcal D}
\big( \nabla u(x):\nabla v(x)\big) dx.
\]

Denote by $H'$ and $V'$ the dual spaces of $H$ and $V$, respectively.
If we identify $H$ with
$H'$, then we have the Gelfand triple $V\subset H\subset V'$ with
continuous, compact and dense injections.  We denote the dual pairing
between $\varphi\in V'$ and $\psi\in V$ by $\langle
\varphi,\psi\rangle_{V',V}$.  When $\varphi \in H$, we have $\langle
\varphi,\psi\rangle_{V',V}=\langle \varphi,\psi\rangle$.

Let $\Pi$ be the Leray--Helmholtz projector from
$[\dot{L}^{2}]^{2}$ onto $H$. The Stokes operator
$A$ is given by
$$
Au=-\Pi \Delta u \quad \hbox{ for every}\quad u \in
	 D(A)=[\dot{H}^{2}]^{2}\cap V.
$$
Note that $A$ is a closed, positive, unbounded self-adjoint operator
in $H$ with inverse $A^{-1}$ which is a self-adjoint compact
operator on $H$.  By the spectral theorem there exists a
sequence $\{\lambda_j\}_{j=1}^{\infty}$ of eigenvalues of the Stokes
operator, with $0<\lambda_{1}\leq \lambda_{2}\leq \cdots$, with
corresponding eigenvectors $e_{j} \in D(A)$ such that
the set $\{e_j:j\in\mathbb{N}\}$
forms an orthonormal basis in $H$.  Moreover, we have
$\lambda_j \sim \lambda_1 j$, as $j\to \infty$, where
$\lambda_1=(2\pi/L)^2$ ({\it cf.} \cite{constantin1988}).

For $\alpha >0$ we will denote the $\alpha$-th power of the operator $A$
by $A^\alpha$ and its domain by $D(A^\alpha)$. We have
$\|u\|^2_{D(A^\alpha)}= \sum_{j=1}^\infty \lambda_j^{2\alpha}
|\langle u,e_j\rangle|^2$.  Moreover, it follows that $V=D(A^{1/2})$ with
$\|\varphi\|_{V}=|A^{1/2}\varphi|_{H}$, for every $\varphi\in V$, and
$D(A^{\alpha_1})$ is compactly embedded in $D(A^{\alpha_2})$, for
$\alpha_1>\alpha_2$. Finally, let $D(A^{-\alpha})$ denote the dual of
$D(A^{\alpha})$.

We have the following Poincar\'e inequalities:

\begin{equation}\label{poincare1}
  |u|_{H}^{2}\leq \lambda_{1}^{-1}\|u\|_V^{2}
	\quad\hbox{for}\quad u\in V
\end{equation}
and
\begin{equation}\label{poincare2}
  \|u\|_V^{2}\leq \lambda_{1}^{-1}|u|_{D(A)}^{2}
	\quad\hbox{for}\quad u\in D(A).
\end{equation}
Let $b(\cdot,\cdot,\cdot)\colon V\times V\times V\to
\mathbb{R}$ be the continuous trilinear form defined as
$$
b(u,v,z)=\int_{\mathcal D}\big((u(x)\cdot\nabla) v(x)\big)\cdot z(x)\, dx .
$$
It is well known that there exists a continuous bilinear operator
$B(\cdot,\cdot)\colon V\times V\to V'$ such that $\langle
B(u,v),z\rangle_{V',V} =b(u,v,z),\ {\rm for}\ {\rm all}\ z\in V.$

\begin{lemma}\label{Blem}(cf. \cite{constantin1988,robinson2001,temam1983,temam2001})
  Let $u,v, z\in V$. Then
  \begin{equation} \label{B} \langle B(u,v),z\rangle_{V',V}=- \langle
    B(u,z),v\rangle_{V',V} \quad \mbox{\rm and}\quad \langle
    B(u,v),v\rangle_{V',V} =0.
  \end{equation}
  Furthermore,
  \begin{equation}\label{bilinear_estimate1}
    |\langle B(u,v), z\rangle_{V',V}|
		\le \|u\|_{L^{4}}\|v\|_{V}\|z\|_{L^{4}}.
  \end{equation}
\end{lemma}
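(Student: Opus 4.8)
The plan is to establish both assertions first for the smooth divergence-free trigonometric polynomials in $\mathcal{V}$, where every manipulation is classical, and then to pass to arbitrary $u,v,z\in V$ by density together with the continuity of $b$ supplied by the estimate \eqref{bilinear_estimate1}. I would prove the estimate first, since it simultaneously guarantees that $b$ extends to a continuous trilinear form on $V\times V\times V$ and that the right-hand sides of the claimed identities are finite. In two dimensions the embedding $V\hookrightarrow[L^4]^2$ (for instance via Ladyzhenskaya's inequality $\|w\|_{L^4}^2\le c\,|w|_H\,\|w\|_V$) ensures that the factors $\|u\|_{L^4}$ and $\|z\|_{L^4}$ appearing in \eqref{bilinear_estimate1} make sense for all $u,z\in V$.

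For the estimate \eqref{bilinear_estimate1}, I would bound the integrand pointwise by $|u(x)|\,|\nabla v(x)|\,|z(x)|$ and apply the generalized H\"older inequality with exponents $4$, $2$, $4$, obtaining $|b(u,v,z)|\le\|u\|_{L^4}\,\|\nabla v\|_{L^2}\,\|z\|_{L^4}$. Since $\|\nabla v\|_{L^2}=\|v\|_V$ by the definition of the $V$-norm, this is exactly \eqref{bilinear_estimate1}.

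For the algebraic identities \eqref{B}, I would work componentwise on $\mathcal{V}$ and integrate by parts:
\begin{plain}$$
b(u,v,z)=\sum_{i,j}\int_{\mathcal D} u_i(\partial_i v_j)z_j\,dx
=-\sum_{i,j}\int_{\mathcal D}\partial_i(u_i z_j)\,v_j\,dx,
$$\end{plain}
with no boundary contribution thanks to $L$-periodicity. Expanding the derivative and invoking $\nabla\cdot u=\sum_i\partial_i u_i=0$ removes the term containing $\partial_i u_i$, leaving $b(u,v,z)=-b(u,z,v)$, which is the first identity in \eqref{B}. Setting $z=v$ then forces $b(u,v,v)=-b(u,v,v)$, so $\langle B(u,v),v\rangle_{V',V}=b(u,v,v)=0$. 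I would finish by transferring both identities from $\mathcal{V}$ to all of $V$ using the density of $\mathcal{V}$ in $V$ and the continuity just established.

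The main obstacle is the rigor of the integration by parts: for general $u,v,z\in V$ the products need not be smooth, and neither the vanishing of the boundary term nor the cancellation of the divergence term can be justified directly. This is precisely why the computation is carried out on the dense subspace $\mathcal{V}$ and then extended. The estimate \eqref{bilinear_estimate1} renders both sides of each identity continuous trilinear functionals on $V$, so an identity that holds on the dense set $\mathcal{V}$ persists on all of $V$.
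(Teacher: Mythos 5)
Your proof is correct and is the standard argument given in the references the paper cites for this lemma (the paper itself states it without proof): H\"older with exponents $4,2,4$ for the bound, integration by parts plus periodicity and $\nabla\cdot u=0$ on the dense subspace $\mathcal{V}$ for the identities, and extension to $V$ by continuity via the two-dimensional embedding $V\hookrightarrow [L^4]^2$. Nothing further is needed.
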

Moreover, one can apply the two-dimensional Ladyzhenskaya
interpolation inequality (cf. \cite{constantin1988})
  \begin{equation}\label{inter}
    \|u\|^{2}_{L^{4}}\leq C_{\rm L}|u|_{H}\|u\|_{V},
  \end{equation}
  to the right-hand side of \eqref{bilinear_estimate1} to obtain
\begin{equation}\label{bilinest}
|\langle B(u,v), z\rangle_{V',V}| \leq C_{\rm L}
|u|_{H}^{1/2}\|u\|_{V}^{1/2} \|v\|_{V}|z|_{H}^{1/2}\|z\|_{V}^{1/2},
\end{equation}
for functions in $V$.

We will also make use of the Br\'ezis--Gallouet inequality \cite{brezis1980},
which may be stated as
\begin{plain}\begin{equation}\label{brezis}
    \|v\|_{\infty}
    \leq C_{\rm B} \|v\|_V\bigg\{ 1+\log {|Av|_{H}^2\over \lambda_1
      \|v\|_{V}^2}\bigg\},
\end{equation}\end{plain}%
for functions in $D(A)$.

\begin{lemma}\label{B2lem}(cf. \cite{constantin1988,temam1997})
In the case of periodic boundary conditions the bilinear term has the
additional orthogonality property
\begin{equation}\label{B1}
  \langle B(v,v),Av\rangle =0,
\end{equation}
for every $v\in D(A)$. In addition, one has
\begin{equation}\label{B2}
  \langle B(u,v),Av\rangle + \langle B(v,u),Av\rangle= -\langle B(v,v),Au\rangle,
\end{equation}
for every $u,v\in D(A)$.
\end{lemma}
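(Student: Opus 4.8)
The plan is to establish the fundamental orthogonality \eqref{B1} first, by a direct computation exploiting the two-dimensional vorticity structure, and then to obtain \eqref{B2} from \eqref{B1} purely algebraically by polarization. Throughout I write $b(u,v,w)=\langle B(u,v),w\rangle$ and use the fact that for periodic boundary conditions the Leray projector $\Pi$ commutes with $\Delta$, so that $Av=-\Delta v$ for $v\in D(A)$ and $Av$ is itself divergence free.

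For \eqref{B1} I would start from the pointwise identity $(v\cdot\nabla)v=\nabla(\tfrac12|v|^2)+\omega\,v^{\perp}$, valid in two dimensions, where $\omega=\partial_1 v_2-\partial_2 v_1$ is the scalar vorticity and $v^{\perp}=(-v_2,v_1)$. Pairing against $Av=-\Delta v$, the gradient term contributes $\langle\nabla(\tfrac12|v|^2),Av\rangle$, which vanishes after integration by parts because $Av$ is divergence free and there are no boundary terms under periodicity. For the remaining rotational term I would introduce the stream function $\psi$ with $v=\nabla^{\perp}\psi$ and $\omega=\Delta\psi$, so that $-\Delta v=-\nabla^{\perp}\omega$ and $v^{\perp}=-\nabla\psi$; a short computation then shows $\omega\,v^{\perp}\cdot(-\Delta v)=-\omega\,(v\cdot\nabla\omega)=-\tfrac12\,v\cdot\nabla(\omega^2)$. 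Integrating over $\mathcal{D}$ and integrating by parts once more, this equals $\tfrac12\int_{\mathcal{D}}(\nabla\cdot v)\,\omega^2\,dx=0$, which gives $\langle B(v,v),Av\rangle=0$.

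For \eqref{B2} I would polarize the cubic functional $w\mapsto b(w,w,Aw)$, which vanishes identically by \eqref{B1}. Expanding $b(u+v,u+v,A(u+v))=0$ and $b(u-v,u-v,A(u-v))=0$ and cancelling the pure terms $b(u,u,Au)=b(v,v,Av)=0$ yields two relations among the six mixed terms; subtracting them isolates $b(u,u,Av)+b(u,v,Au)+b(v,u,Au)=0$. Interchanging the roles of $u$ and $v$ gives exactly $b(u,v,Av)+b(v,u,Av)=-b(v,v,Au)$, which is \eqref{B2}.

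The main obstacle is the computation underlying \eqref{B1}: one must justify the reduction of $\langle B(v,v),Av\rangle$ to the Jacobian $\tfrac12\int_{\mathcal{D}} v\cdot\nabla(\omega^2)\,dx$ carefully, verifying that every integration by parts produces no boundary contribution (which is where periodicity is essential, and where the analogous statement fails for other boundary conditions) and that the regularity $v\in D(A)$ suffices to legitimize these manipulations, by a density argument starting from $\mathcal{V}$. Once \eqref{B1} is in hand, the polarization step is routine bookkeeping.
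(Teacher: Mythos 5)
Your proof is correct. Note that the paper itself offers no proof of this lemma --- it is quoted with a citation to Constantin--Foias and Temam --- so there is no internal argument to compare against; what you have written is essentially the classical derivation from those references, made self-contained. For \eqref{B1} your decomposition $(v\cdot\nabla)v=\nabla(\tfrac12|v|^2)+\omega v^{\perp}$ is right, the gradient part dies against the divergence-free field $Av=-\Delta v$, and the reduction of the rotational part to $-\tfrac12\int_{\mathcal D}v\cdot\nabla(\omega^2)\,dx$ via the stream function is exactly the enstrophy-conservation identity $\int_{\mathcal D}(v\cdot\nabla\omega)\,\omega\,dx=0$ that underlies the standard proof; your remarks about periodicity (no boundary terms) and density from $\mathcal V$ correctly identify where the argument lives and why it fails for other boundary conditions. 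For \eqref{B2} the polarization of the cubic form $w\mapsto\langle B(w,w),Aw\rangle$ is clean and the bookkeeping checks out: subtracting the expansions of $b(u\pm v,u\pm v,A(u\pm v))=0$ gives $b(u,u,Av)+b(u,v,Au)+b(v,u,Au)=0$, and swapping $u$ and $v$ yields \eqref{B2} (adding the two expansions would in fact give \eqref{B2} directly, which is marginally shorter). One small point worth making explicit if you write this up: you pass from $\langle B(v,v),Av\rangle$ to $\int_{\mathcal D}(v\cdot\nabla)v\cdot(-\Delta v)\,dx$ using the self-adjointness of $\Pi$ and the fact that $Av\in H$, and you use that $\Pi$ commutes with $\Delta$ only in the periodic setting; both are stated in the paper's functional setting but should be invoked at that step.
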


Applying the Leray-Helmholtz projector $\Pi$ to \eqref{NSE} one
obtains the equivalent functional evolution equation
\begin{equation}\label{abstract}
      \frac{dU}{dt}+\nu AU+B(U,U)= f,
\end{equation}
with initial condition $U({0})=U_{0}$, where
we assume that $f\in H$ and $U_0\in V$.  Similarly
the data-assimilation equation \eqref{approxeqn} becomes
\begin{plain}\begin{equation}\label{abapprox}
	\eqalign{
	du+(\nu Au &+B(u,u))dt
		= \big(f -\mu \Pi  R_h(u-U)\big)dt+\mu dW,
}\end{equation}\end{plain}%
where $dW(t)=\Pi\xi(t)dt$ is the noise term.

\subsection{The Noise Term}\label{subsecerror}

In this section we describe the error
term ${\cal E}\colon [0,\infty)\to R^{2N}$ that
gives rise to the noisy observations $\tilde {\cal O}_h$
in equation \eqref{noisyO} in terms of Brownian motions.
We then
use the definition
$\tilde R_h=\L_h\circ \tilde {\cal O}_h$
to obtain $dW$ in \eqref{abapprox}.

Following Da Prato and Zabczyk \cite{daprato1992}
fix a filtered probability space
$(\Omega, \mathcal{F}, (\mathcal{F}_t), \mathbb{P})$
on which is defined is a sequence of independent
one-dim\-en\-sional Brownian motions $b_d(t)$, for $d=1,2,\ldots,D$,
relative to the filtration $(\mathcal{F}_t)$ such
that
\begin{plain}$$
\E(b_d(t))=0
\quad\hbox{and}\quad
\E(b_d(t)^2) =  t\sigma^2/2
\quad\hbox{for}\quad t\ge 0.
$$\end{plain}%
For convenience we shall assume the filtration is complete
and right continuous.  The measurement errors may now be described by
\begin{equation}\label{noise}
	{\cal E}(t)dt=(db_1(t),db_2(t),\ldots,db_{D}(t)).
\end{equation}

Note that $\sigma$ is a dimensional constant whose
units of measurement must be chosen so the units of
measurement for ${\cal O}_h(U(t))$ are the same as ${\cal E}$.
Given a quantity $z$ let $[z]$ represent the units
used to measure $z$.  Then $[{\cal O}_h]=[{\cal E}]$ implies
$[\sigma^2]=[{\cal O}_h]^2[t]$.
In particular, if our observations are
velocities as in~\eqref{volumes} or~\eqref{nodes}, we then have
$[{\cal O}_h]=[L]/[t]$ so that
$[\sigma^2]=[L]^2/[t]$.

Writing the linear operator
$\L_h\colon \mathbb{R}^D\to [\dot H^{\alpha}]^2$, for $\alpha\ge 0$, as
\begin{equation}\label{genL}
	\L_h(\zeta)(\cdot)=\sum_{d=1}^{D} \zeta_d \ell_d(\cdot),
\quad\hbox{where}\quad \zeta \in \mathbb{R}^D \quad\hbox{and}\quad
\ell_d\in [\dot H^\alpha]^2,
\end{equation}
it follows that the noise term in \eqref{abapprox}
is the Wiener process
\begin{equation}\label{wiener}
	W(t)=\sum_{d=1}^D b_d(t)\gamma_d,
\quad\hbox{where}\quad
	\gamma_d=\Pi\ell_d.
\end{equation}
We do not assume $\gamma_d$ are orthogonal or
even linearly independent.

When $\alpha\ge 0$ our assumptions dictate that $W$ is a
$[\dot{H}^{\alpha}]^{2}$-valued
$Q$-Brownian motion, where $\E(W(t))=0$.
Following \cite{daprato1992} pages 26--27, we have
\begin{plain}$$\eqalign{
	tQ={\cov}(W(t))
	=\E\Big(\sum_{d,p=1}^D b_d(t)\gamma_d
			\otimes b_p(t)\gamma_p\Big).
}$$\end{plain}%
Note that
$Q$ is a nonnegative and symmetric linear operator with
finite trace.  In particular, we have
\begin{plain}$$
\eqalign{
  \trace\big[{\cov}(W(t))\big]
	&=\sum_{j=1}^{\infty}\big\langle
		{\rm Cov}(W(t))e_{j},e_{j}\big\rangle\nonumber\cr
    &=\sum_{j=1}^{\infty}\E\left(\sum_{d,p=1}^D
		\langle  b_d(t)\gamma_d, e_{j}\rangle
    \langle b_p(t)\gamma_p, e_{j}\rangle\right)\nonumber\cr
    &=\sum_{j=1}^{\infty}\left(\sum_{d,p=1}^D
		\E\left(b_d(t) b_p(t)\right)
    \langle \gamma_d, e_{j}\rangle \langle\gamma_p,
		e_{j}\rangle\right)\nonumber\cr
    &=t{\sigma^2\over 2}\sum_{j=1}^{\infty}\sum_{d=1}^D
    |\langle\gamma_d, e_{j}\rangle|^2\nonumber
    =t{\sigma^2\over 2}\sum_{d=1}^D |\gamma_d|_H^2.
}
$$\end{plain}%
Therefore,
\begin{plain}\begin{equation}\label{absTRQ}
  \trace[Q]
    ={\sigma^2\over 2}\sum_{d=1}^D |\gamma_d|_H^2<\infty.
\end{equation}\end{plain}%

We next turn our attention to the
specific interpolant observable based on volume
elements given by \eqref{volumes} and \eqref{stepL}.
In this case, setting
\begin{plain}\begin{equation}\label{stepell}
\ell_{2n-1}(x)=\left[\matrix{\chi_{Q_n}(x)-h^2/L^2\cr 0}\right]
\quad\hbox{and}\quad
\ell_{2n-1}(x)=\left[\matrix{0\cr \chi_{Q_n}(x)-h^2/L^2}\right],
\end{equation}\end{plain}%
yield, for $n=1,2,\ldots,N$, the $D=2N$ functions needed
in \eqref{genL} and we obtain	

\begin{proposition}\label{traceH}
Let $W(t)$ be defined as in \eqref{wiener}, where $\ell_d$
are given by \eqref{stepell},
for $d=1,2,\ldots,2N$.
Then $W$ is a $[\dot L^2]^2$-valued $Q$-Brownian motion with
covariance operator $Q$ that satisfies
$\trace[Q] \le \sigma^2 L^2.$
\end{proposition}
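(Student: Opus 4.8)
The plan is to start from the general trace identity \eqref{absTRQ}, which was derived for an arbitrary $\L_h$ of the form \eqref{genL} and gives $\trace[Q]=\frac{\sigma^2}{2}\sum_{d=1}^{2N}|\gamma_d|_H^2$ with $\gamma_d=\Pi\ell_d$. Since the functions $\ell_d$ in \eqref{stepell} have zero spatial average (the correction $-h^2/L^2$ is exactly the mean of $\chi_{Q_n}$ over $\mathcal{D}$), they lie in $[\dot L^2]^2$, so the construction \eqref{genL}--\eqref{wiener} applies with $\alpha=0$; in particular $W$ is automatically an $H$-valued, hence $[\dot L^2]^2$-valued, $Q$-Brownian motion with $\E(W(t))=0$, and $Q$ has finite trace. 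Thus the whole proposition reduces to estimating the finite sum $\sum_{d}|\gamma_d|_H^2$.

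First I would use that $\Pi$ is an orthogonal projection of $[\dot L^2]^2$ onto $H$, so it does not increase the norm: $|\gamma_d|_H=|\Pi\ell_d|_H\le\|\ell_d\|_{L^2}$. This replaces the $\gamma_d$, about which we know little, by the explicit step functions $\ell_d$. The main computation is then the $L^2$ norm of a single nonzero component, $\|\chi_{Q_n}-h^2/L^2\|_{L^2(\mathcal{D})}^2$. Splitting the integral over $Q_n$ and its complement and using $|Q_n|=h^2$, $|\mathcal{D}|=L^2$ gives $h^2(1-h^2/L^2)^2+(L^2-h^2)(h^2/L^2)^2=h^2(1-h^2/L^2)\le h^2$; since each $\ell_d$ from \eqref{stepell} has exactly one nonzero scalar component, $\|\ell_d\|_{L^2}^2\le h^2$ as well.

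Summing over the $D=2N$ indices and using $N=K^2$, $h=L/K$ (so that $Nh^2=L^2$) yields $\sum_{d=1}^{2N}|\gamma_d|_H^2\le 2Nh^2=2L^2$, and substituting into \eqref{absTRQ} gives $\trace[Q]\le\sigma^2 L^2$, as claimed. I do not expect a genuine obstacle here: the argument is a direct calculation once \eqref{absTRQ} is available. The only points requiring a little care are tracking the factor $1/2$ coming from the normalization $\E(b_d(t)^2)=t\sigma^2/2$, and observing that the contribution of the constant correction $-h^2/L^2$ is precisely what makes the per-cell norms combine, through $Nh^2=L^2$, into the clean bound $L^2$; the projection inequality $|\Pi\ell_d|_H\le\|\ell_d\|_{L^2}$ absorbs any slack, so sharpness is not needed.
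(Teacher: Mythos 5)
Your proposal is correct and follows essentially the same route as the paper: start from \eqref{absTRQ}, use that the Leray--Helmholtz projection does not increase the $H$-norm, and compute $\|\chi_{Q_n}-h^2/L^2\|_{L^2}^2=h^2(1-h^2/L^2)$ per cell, summing to $L^2-h^2\le L^2$. The only cosmetic difference is that you evaluate the per-cell integral by splitting over $Q_n$ and its complement, while the paper expands the square using $\chi_{Q_n}^2=\chi_{Q_n}$; the two computations are identical in substance.
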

\begin{proof}
The calculation
\begin{plain}$$\eqalign{
  \trace[Q]
	&={\sigma^2\over 2}\sum_{d=1}^{2N} |\gamma_d|_H^2
		\le{\sigma^2\over 2}\sum_{d=1}^{2N} |\ell_d|_{L^2}^2
	=\sigma^2\sum_{n=1}^N
		\int_{\cal D}\Big|\chi_{Q_n}(x)-{h^2\over L^2}\Big|^2 dx\cr
	&=\sigma^2\sum_{n=1}^N
		\int_{\cal D}\Big\{\Big(1-{2h^2\over L^2}\Big)\chi_{Q_n}(x)
			+{h^4\over L^4}\Big\} dx
	\le \sigma^2(L^2-h^2)
	\le \sigma^2 L^2	
}$$\end{plain}%
immediately yields the result.
\end{proof}

We now recall the construction of the smooth interpolant
observables used for nodal measurements
that were constructed in the appendix of~\cite{olson2013}, and
which satisfy~\eqref{R2}. Then we derive the estimates needed in our analysis of~\eqref{abapprox}
for the terms resulting from the It\^o formula.

Let $Q_n$, for $n=1,2,\ldots, N$, be the $N=K^2$ squares with
sides $h=L/K$ described in the introduction
such that ${\cal D}=\cup_{i=1}^N Q_n$.
In particular, we set ${\cal J}= \{\, 1,\ldots,K\,\}^2$
and for $(i,j)\in {\cal J}$
define
\begin{equation}\label{squares}
Q_n=[(i-1)h,ih)\times [(j-1)h,jh),
\end{equation}
where $n=i+(j-1) K.$
Further define
\begin{equation}\label{charper}
\psi_n(x)=\sum_{k\in\mathbb{Z}^2}
\chi_{Q_n}(x+kL), \quad {\rm for}\quad x\in
\mathbb{R}^{2},
\end{equation}
as the $L$-periodicized characteristic function of $Q_n$.
Note that $\psi_n\in L^2$, and moreover, that
$ \langle \psi_n^2\rangle =\langle \psi_n\rangle = h^2/L^2.  $

To obtain a smoother interpolant let
$$\tilde \psi_n(x)=
(\rho_{h/10}*\psi_n)(x)$$ be the mollified version of
$\psi_n$, where
$\rho_\epsilon(x)=\epsilon^{-2}\rho(x/\epsilon)$, and
\begin{plain}\begin{eqnarray*}
  \rho(z)=\left\{\begin{array}{lr}
      K_0 \displaystyle
      \exp\Big({1\over 1-|z|^2}\Big)&   |z|<1\\
      0 & |z|\geq 1
    \end{array}
  \right.
\end{eqnarray*}\end{plain}%
with
\begin{plain}$$
	(K_0)^{-1}=\int_{|z|<1} \exp\Big({1\over 1-|z|^2}\Big)\,dz.
$$\end{plain}%
Now setting
\begin{plain}\begin{equation}\label{smoothell}
	\ell_{2n-1} = \left[\matrix{\tilde \psi_{n}
		-\langle\tilde\psi_{n}\rangle\cr 0}\right]
\quad\hbox{and}\quad
	\ell_{2n} = \left[\matrix{0\cr\tilde\psi_{n}
		-\langle\tilde\psi_{n}\rangle}\right],
\end{equation}\end{plain}%
for $n=1,2,\ldots,N$, yields the $D=2N$ functions needed
in \eqref{genL}
for the definition of $\L_h$.
As shown in the appendix of \cite{olson2013},
if the observations are given by volume elements,
then the resulting interpolant satisfies \eqref{R1};
if the observations are given by nodal points,
then the resulting interpolant satisfies~\eqref{R2}.

We finish this section with some explicit estimates on the
trace of the covariance operator $Q$, for the Wiener process
$W$ given in \eqref{wiener}
for the choice of $\ell_n$ given in \eqref{smoothell}.
Before that, we state two propositions which we
shall make use of in the proof as well as in other parts of this paper.
Detailed proofs of these propositions appear in the appendix
of \cite{olson2013}, where
the functions $\tilde \psi_n$ have been introduced along
with their associated interpolant observables.

\begin{proposition}\label{prop1}
Let
$$\mathcal{U}_n =\{\, x+y : x\in Q_n\hbox{ and\/ }|y|<\epsilon\,\},
\quad\hbox{for}\quad n=1,2,\ldots N.
$$
Then $\{\,\tilde\psi_n:n=1,2,\ldots,N\,\}$ is a
smooth partition of unity satisfying\par
  \begin{enumerate}
  \item[(i)] $0 \le \tilde\psi_n(x)\le 1$ and ${\rm
      supp}(\tilde\psi_n)\subseteq \mathcal{U}_n+ \big
    (L \mathbb{ Z}\big)^2$,
  \item[(ii)] $\tilde\psi_n(x)=1$, \, for all \, $x\in
    \big(\mathcal{C}_n+ \big(L \mathbb{Z}\big)^2\big)$ and \\ \hbox{} \\
	\hbox{\qquad}
    \,$\sum_{n=1}^N\tilde\psi_n(x)=1$, \, for all $x\in
    \mathbb{R}^2$,
  \item[(iii)] $\langle \tilde\psi_n \rangle=
    \big({h/L}\big)^2$ and \, $\frac{4}{5} h \le
    \big\|\tilde\psi_n \big\|_{L^2(D)} \le \frac{6}{5} h$,
  \item[(iv)] ${\rm supp}(\nabla\tilde\psi_n)\subseteq
    \big(\mathcal{U}_n\setminus \mathcal{C}_n \big)+ L
    \mathbb{Z}^2$,
  \item[(v)] $|\nabla \tilde\psi_n(x)|\le c h^{-1}$ and
    $|{\partial^2}
\tilde\psi_n(x) /
		{\partial x_i \partial x_j}
	|\le c h^{-2}$, \, for all $x\in
    \mathbb{R}^2$,
  \item[(vi)] $\big\|\nabla\tilde\psi_n \big\|_{L^2(\mathcal{D})}
    \le c $.
  \end{enumerate}
\end{proposition}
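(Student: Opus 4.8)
The plan is to derive all six properties from a few elementary facts about the mollifier together with the explicit geometry of the construction, in which $\psi_n$ is the $L$-periodicized indicator of a square $Q_n$ of side $h$ and $\tilde\psi_n=\rho_\epsilon*\psi_n$ with $\epsilon=h/10$. The facts I will reuse are: $\rho_\epsilon\ge0$ with $\int_{\mathbb{R}^2}\rho_\epsilon=1$; convolution is linear; and $\|\partial^\beta\rho_\epsilon\|_{L^1}=\epsilon^{-|\beta|}\|\partial^\beta\rho\|_{L^1}$, the latter a finite constant $C_\beta$ since $\rho$ is smooth and compactly supported. Throughout, $\mathcal{C}_n$ denotes the concentric sub-square of $Q_n$ obtained by retracting a distance $\epsilon$ from each side, so $\mathcal{C}_n$ has side $h-2\epsilon=\tfrac45 h$, while $\mathcal{U}_n$ is contained in the concentric square of side $h+2\epsilon=\tfrac65 h$.

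Properties (i) and (ii) are then immediate. Nonnegativity of $\rho_\epsilon$ and $0\le\psi_n\le1$ give $0\le\tilde\psi_n\le1$ pointwise, and the support inclusion follows from $\mathrm{supp}(\rho_\epsilon*\psi_n)\subseteq\mathrm{supp}(\rho_\epsilon)+\mathrm{supp}(\psi_n)$, using $\mathrm{supp}(\rho_\epsilon)=\overline{B(0,\epsilon)}$ and $\mathrm{supp}(\psi_n)=Q_n+(L\mathbb{Z})^2$. For the first half of (ii), whenever $x$ lies in the core so that $B(x,\epsilon)\subseteq Q_n+(L\mathbb{Z})^2$, one has $\psi_n\equiv1$ on that ball and hence $\tilde\psi_n(x)=\int\rho_\epsilon=1$; the set of such $x$ is precisely $\mathcal{C}_n+(L\mathbb{Z})^2$. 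The partition-of-unity identity is just linearity: since the squares tile the plane, $\sum_n\psi_n\equiv1$, and therefore $\sum_n\tilde\psi_n=\rho_\epsilon*1=1$.

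For (iii) I would preserve the average by Fubini and periodicity, $\langle\tilde\psi_n\rangle=\langle\psi_n\rangle=h^2/L^2$, since convolution with the mean-one kernel $\rho_\epsilon$ preserves the integral over one period; the two-sided $L^2$ bound then comes from the two regions above, which is where the value $\epsilon=h/10$ pins down the constants. Since $\tilde\psi_n\equiv1$ on $\mathcal{C}_n$ by (ii), $\|\tilde\psi_n\|_{L^2(\mathcal{D})}^2\ge|\mathcal{C}_n|=(\tfrac45 h)^2$, and since $0\le\tilde\psi_n\le1$ is supported in $\mathcal{U}_n$, $\|\tilde\psi_n\|_{L^2(\mathcal{D})}^2\le|\mathcal{U}_n|\le(\tfrac65 h)^2$; taking square roots gives (iii). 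Property (iv) uses the same two regions: $\tilde\psi_n$ is locally constant ($\equiv1$ on $\mathcal{C}_n$, $\equiv0$ off $\mathcal{U}_n$), so $\nabla\tilde\psi_n$ is supported in the shell $(\mathcal{U}_n\setminus\mathcal{C}_n)+L\mathbb{Z}^2$. For (v) I would move derivatives onto the kernel, $\partial^\beta\tilde\psi_n=(\partial^\beta\rho_\epsilon)*\psi_n$, and apply Young's inequality, $\|\partial^\beta\tilde\psi_n\|_{L^\infty}\le\|\partial^\beta\rho_\epsilon\|_{L^1}\|\psi_n\|_{L^\infty}=C_\beta\epsilon^{-|\beta|}$, which yields the $ch^{-1}$ and $ch^{-2}$ bounds for $|\beta|=1,2$. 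Finally (vi) follows either by combining (iv) and (v), $\|\nabla\tilde\psi_n\|_{L^2(\mathcal{D})}^2\le(ch^{-1})^2|\mathcal{U}_n\setminus\mathcal{C}_n|\le(ch^{-1})^2\cdot c'h^2$, since the shell has area $O(h^2)$, or at once from Young's inequality, $\|(\nabla\rho_\epsilon)*\psi_n\|_{L^2}\le\|\nabla\rho_\epsilon\|_{L^1}\|\psi_n\|_{L^2}=C_1\epsilon^{-1}h=10C_1$.

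There is no deep obstacle: the content is the geometry of a mollified square, and the choice $\epsilon=h/10$ is made precisely so that $h\mp2\epsilon$ produce the constants $\tfrac45$ and $\tfrac65$. The only points demanding care are the periodicity bookkeeping — one must check $\epsilon<h/2$ so the cores $\mathcal{C}_n$ are nonempty and $\epsilon<L/2$ so the mollifier does not wrap around the period when Young's inequality is applied on the torus — and confirming that, although adjacent fattened supports $\mathcal{U}_n$ overlap in their shells, the support and constancy statements are unaffected, since those overlaps are exactly accounted for by the partition-of-unity identity of (ii).
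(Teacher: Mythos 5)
Your argument is correct, but note that this paper does not actually prove Proposition~\ref{prop1}: it explicitly defers the proof to the appendix of \cite{olson2013}, so there is no in-paper proof to compare against. Your mollifier argument --- positivity and unit mass of $\rho_\epsilon$ for (i) and (ii), Fubini plus the geometry of the retracted square $\mathcal{C}_n$ (side $h-2\epsilon=\tfrac45h$) and the fattened set $\mathcal{U}_n$ (contained in a square of side $h+2\epsilon=\tfrac65h$) for (iii) and (iv), moving derivatives onto the kernel with Young's inequality for (v) and (vi) --- is the standard route and is essentially what the cited appendix does. The only points worth flagging are cosmetic: the paper never defines $\mathcal{C}_n$, so you are right to supply the natural definition as the $\epsilon$-retraction of $Q_n$ (this is forced by the constants $\tfrac45$ and $\tfrac{36}{25}$ appearing in Propositions~\ref{prop1} and~\ref{prop2}); and the support inclusions in (i) and (iv) hold only up to taking closures, a distinction the statement itself glosses over. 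Your closing caveats about $\epsilon<h/2$ and $\epsilon<L/2$ are the right ones to check and both hold since $\epsilon=h/10$ and $h\le L$.
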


\begin{proposition}\label{prop2}
Let ${\cal K}=\{1-K,-1,0,1,-1+K\}^2$.
The functions $\tilde\psi_n$
are nearly orthogonal in the following sense:
Suppose $\alpha,\beta\in {\cal J}$ are such that
$n=\alpha_1+(\alpha_2-1)K$ and $m=\beta_1+(\beta_2-1)K$.
Then
  \begin{enumerate}
  \item[(i)] $\displaystyle \int_{ \mathcal{D}} \tilde\psi_n(x)
    \tilde\psi_m(x)\,dx=0$,\quad for $\beta-\alpha\notin {\cal K}$,
  \item[(ii)]
$\displaystyle \int_{\mathcal{D}}
    \big(\nabla\tilde\psi_n(x)\big ) \cdot \big(\nabla
    \tilde\psi_m(x)\big)\,dx=0$,\quad
        for $\beta-\alpha\notin {\cal K}$.
  \item[(iii)] $\displaystyle \Big|\int_{\mathcal{D}}
    \tilde\psi_n(x) \tilde\psi_m(x)\,dx\Big| \le
    (h+2\epsilon)^2= \frac{36}{25}h^2$,\quad
	for $\beta-\alpha\in {\cal K}$.
  \item[(iv)] $\displaystyle
    \Big|\int_{\mathcal{D}}\big(\nabla\tilde\psi_n(x)\big ) \cdot
    \big(\nabla \tilde\psi_m(x)\big)\,dx\Big| \le c$,\quad
 for $\beta-\alpha\in {\cal K}$.
  \end{enumerate}
\end{proposition}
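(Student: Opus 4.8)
The plan is to reduce all four statements to the geometry of the supports of the mollified characteristic functions $\tilde\psi_n$, exploiting that the mollification scale $\epsilon=h/10$ is small compared with the cell size $h$. By Proposition~\ref{prop1}(i) the periodic support of $\tilde\psi_n$ is contained in $\mathcal{U}_n+(L\mathbb{Z})^2$, the $\epsilon$-neighborhood of the cell $Q_n$ wrapped around the torus, and $\nabla\tilde\psi_n$ is supported inside the support of $\tilde\psi_n$ (in fact in the thinner set of Proposition~\ref{prop1}(iv)). Consequently all four integrands vanish pointwise wherever the periodic supports of $\tilde\psi_n$ and $\tilde\psi_m$ do not meet, so the crux is to decide exactly when these supports overlap.

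First I would translate the overlap condition into arithmetic on the $K\times K$ index grid. Writing $\alpha,\beta\in\mathcal{J}$, the cell $Q_n$ occupies the coordinate interval $[(\alpha_i-1)h,\alpha_i h)$ in the $i$-th direction, and likewise $Q_m$ with $\beta_i$. On the torus $\mathbb{R}^2/(L\mathbb{Z})^2$ the cyclic index distance in coordinate $i$ is $\min(d_i,K-d_i)$ with $d_i=|\beta_i-\alpha_i|$; this distance is at most $1$ exactly when $\beta_i-\alpha_i\in\{1-K,-1,0,1,K-1\}$, where the values $\pm(K-1)$ record the wraparound identifying the first and last rows or columns. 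If instead some coordinate has cyclic index distance at least $2$, then $Q_n$ and $Q_m$ are separated by a full empty cell in that direction, hence by Euclidean distance at least $h$; since $2\epsilon=h/5<h$, their $\epsilon$-neighborhoods are then disjoint and both integrands are identically zero. As $\beta-\alpha\notin\mathcal{K}$ means precisely that some coordinate fails to lie in $\{1-K,-1,0,1,K-1\}$, this establishes (i) and (ii).

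For (iii) I would use $0\le\tilde\psi_n,\tilde\psi_m\le 1$ from Proposition~\ref{prop1}(i), so that $0\le\tilde\psi_n\tilde\psi_m\le 1$ with the product supported in $\mathrm{supp}(\tilde\psi_n)$. Hence
\[
\Big|\int_{\mathcal{D}}\tilde\psi_n\tilde\psi_m\,dx\Big|
\le\big|\,\mathrm{supp}(\tilde\psi_n)\cap\mathcal{D}\,\big|\le(h+2\epsilon)^2=\frac{36}{25}h^2,
\]
since the periodic support meets one period $\mathcal{D}$ in a subset of $\mathcal{U}_n$, the Minkowski sum of the $h\times h$ cell $Q_n$ with a disk of radius $\epsilon$, which is contained in a square of side $h+2\epsilon$. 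Statement (iv) then follows directly from the Cauchy--Schwarz inequality and the uniform gradient bound $\|\nabla\tilde\psi_n\|_{L^2(\mathcal{D})}\le c$ of Proposition~\ref{prop1}(vi), giving $\bigl|\int_{\mathcal{D}}\nabla\tilde\psi_n\cdot\nabla\tilde\psi_m\,dx\bigr|\le\|\nabla\tilde\psi_n\|_{L^2(\mathcal{D})}\|\nabla\tilde\psi_m\|_{L^2(\mathcal{D})}\le c^2$, the desired bound after relabelling the constant.

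I expect the only genuinely delicate point to be the index bookkeeping in the second step: correctly seeing that the near-orthogonality pattern is governed by cyclic distance on the grid, that the periodic wraparound contributes exactly the indices $\pm(K-1)$, and that the choice $\epsilon=h/10$ is what guarantees non-neighboring cells never interact. Once the overlap set $\mathcal{K}$ is pinned down in this way, the quantitative estimates (iii) and (iv) are routine.
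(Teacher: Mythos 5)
Your proof is correct: the reduction of (i)--(ii) to disjointness of the $\epsilon$-fattened cells (using $2\epsilon=h/5<h$ and the correct identification of the wraparound indices $\pm(K-1)$), the measure-of-support bound $(h+2\epsilon)^2=\tfrac{36}{25}h^2$ for (iii), and Cauchy--Schwarz with Proposition~\ref{prop1}(vi) for (iv) are all sound. Note that the paper does not prove this proposition itself but cites the appendix of \cite{olson2013}; the argument there is essentially the same support-overlap reasoning you give, so there is nothing further to reconcile.
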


Let us emphasize that the constant $c$, appearing in
Proposition \ref{prop1} parts
(v) and (vi), is independent of $h$.
We are now ready to prove
the following proposition on the
trace of the covariance operator $Q$ for Wiener process $W$.

\begin{proposition}\label{traces}
Let $W(t)$ be defined as in \eqref{wiener}
for the choice of $\ell_n$ given by equations
\eqref{smoothell}.  Then
$W$ is a $[\dot{H}^1]^{2}$-valued $Q$-Brownian
motion with covariance operator $Q$ that satisfies
      \begin{equation}\label{tr-Q}
        \trace[Q]\leq \frac{36}{25}\sigma^{2} L^2
      \end{equation}
and
      \begin{equation}\label{tr-Q-V}
        \trace[A^{1/2}QA^{1/2}]\leq  c \sigma^{2} \frac{L^2}{h^2}.
      \end{equation}
\end{proposition}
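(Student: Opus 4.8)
The plan is to reduce both estimates to the two scalar sums $\sum_{d=1}^{2N}|\gamma_d|_H^2$ and $\sum_{d=1}^{2N}\|\gamma_d\|_V^2$, and then to control these using the $L^2$ and gradient bounds on $\tilde\psi_n$ supplied by Proposition \ref{prop1}. Throughout I use $\gamma_d=\Pi\ell_d$, that $\Pi$ is an orthogonal projection on $[\dot L^2]^2$, and the counting relation $N=K^2=L^2/h^2$, $D=2N$.

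For \eqref{tr-Q} the work is essentially done already: the identity \eqref{absTRQ} established in the excerpt gives $\trace[Q]=\tfrac{\sigma^2}{2}\sum_{d}|\gamma_d|_H^2$, and since $\Pi$ is an $L^2$-contraction I have $|\gamma_d|_H=|\Pi\ell_d|_H\le|\ell_d|_{L^2}$. For the first family I compute $|\ell_{2n-1}|_{L^2}^2=\|\tilde\psi_n-\langle\tilde\psi_n\rangle\|_{L^2}^2=\|\tilde\psi_n\|_{L^2}^2-L^2\langle\tilde\psi_n\rangle^2\le\|\tilde\psi_n\|_{L^2}^2\le(6h/5)^2$ by Proposition \ref{prop1}(iii), and likewise for $\ell_{2n}$. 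Summing the $2N$ terms and using $Nh^2=L^2$ gives $\sum_d|\ell_d|_{L^2}^2\le 2N\cdot\tfrac{36}{25}h^2=\tfrac{72}{25}L^2$, hence $\trace[Q]\le\tfrac{36}{25}\sigma^2L^2$.

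For \eqref{tr-Q-V} the first step is to record the $H^1$-analogue of \eqref{absTRQ}. Writing $Q=\tfrac{\sigma^2}{2}\sum_d\gamma_d\otimes\gamma_d$, which is exactly what the covariance computation preceding \eqref{absTRQ} produces after using $\E(b_db_p)=\delta_{dp}t\sigma^2/2$, and using $A^{1/2}e_j=\lambda_j^{1/2}e_j$, I get $\trace[A^{1/2}QA^{1/2}]=\sum_j\lambda_j\langle Qe_j,e_j\rangle=\tfrac{\sigma^2}{2}\sum_d\sum_j\lambda_j|\langle\gamma_d,e_j\rangle|^2=\tfrac{\sigma^2}{2}\sum_d\|\gamma_d\|_V^2$, since $\|u\|_V^2=|A^{1/2}u|_H^2=\sum_j\lambda_j|\langle u,e_j\rangle|^2$. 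It then remains to bound each $\|\gamma_d\|_V=\|\Pi\ell_d\|_V$. Here I use that in the periodic setting the Leray projector commutes with differentiation (both are Fourier multipliers), so $\partial_i\Pi\ell_d=\Pi\partial_i\ell_d$; combined with the contraction property of $\Pi$ this yields $\|\Pi\ell_d\|_V^2=\int_{\cal D}|\nabla\Pi\ell_d|^2\le\int_{\cal D}|\nabla\ell_d|^2=\|\nabla\ell_d\|_{L^2}^2$. The constant $\langle\tilde\psi_n\rangle$ drops under the gradient, so $\|\nabla\ell_{2n-1}\|_{L^2}=\|\nabla\tilde\psi_n\|_{L^2}\le c$ by Proposition \ref{prop1}(vi) (with $c$ independent of $h$), and similarly for $\ell_{2n}$. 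Summing the $2N$ terms and using $N=L^2/h^2$ gives $\trace[A^{1/2}QA^{1/2}]\le\tfrac{\sigma^2}{2}\cdot2Nc^2=c\,\sigma^2L^2/h^2$. Finiteness of this trace is precisely the condition guaranteeing that $W$ is a well-defined $[\dot H^1]^2$-valued $Q$-Brownian motion, completing the argument.

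Essentially the whole proof is bookkeeping against Proposition \ref{prop1}; the only genuinely non-routine points are the derivation of the $H^1$ trace identity $\trace[A^{1/2}QA^{1/2}]=\tfrac{\sigma^2}{2}\sum_d\|\gamma_d\|_V^2$ and the contraction estimate $\|\Pi\ell_d\|_V\le\|\nabla\ell_d\|_{L^2}$, which rests on the commutation of $\Pi$ with $\nabla$ under periodic boundary conditions. I expect this commutation step to be the part most in need of justification, since it is what lets the projection drop out and the bound reduce to the clean gradient estimate in Proposition \ref{prop1}(vi).
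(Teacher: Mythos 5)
Your proof is correct and follows essentially the same route as the paper: both estimates are reduced to $\tfrac{\sigma^2}{2}\sum_d|\gamma_d|_H^2$ and $\tfrac{\sigma^2}{2}\sum_d\|\gamma_d\|_V^2$, the projection is discarded via $|\Pi\ell_d|_H\le|\ell_d|_{L^2}$ and $\|\Pi\ell_d\|_V\le\|\nabla\ell_d\|_{L^2}$ (the paper states the latter exactly as you justify it, via periodicity), and the sums are closed with Proposition \ref{prop1} parts (iii) and (vi) together with $N=L^2/h^2$. The only cosmetic difference is that the paper writes the $L^2$ step as $L^2(\langle\tilde\psi_n^2\rangle-\langle\tilde\psi_n\rangle^2)$ where you write $\|\tilde\psi_n\|_{L^2}^2-L^2\langle\tilde\psi_n\rangle^2$, which is the same quantity.
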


\begin{proof}
Since $\rho\in C^{\infty}(\mathbb{R}^2)$
then the range of $\L_h$ is in $[\dot H^\alpha]^2$, for
every $\alpha\ge 0$, and in particular for $\alpha=1$.
Therefore, $W$ is an $[\dot H^1]^2$-valued $Q$-Brownian
motion.
From \eqref{absTRQ}, Proposition \ref{prop1} part (iii)
and Proposition \ref{prop2} part (iii) we
estimate
\begin{plain}$$\eqalign{
  \trace[Q]
	&= {\sigma^{2}\over 2}\sum_{d=1}^{2N} |\gamma_d|_H^2
     \le {\sigma^2\over 2} \sum_{d=1}^{2N} \|\ell_d\|_{L^2}^2
     = \sigma^2 \sum_{n=1}^{N} \|\tilde\psi_n
			-\langle\tilde\psi_n\rangle\|_{L^2}^2 \cr
  & = \sigma^{2}   \sum_{n=1}^N L^2
		\big( \langle \tilde\psi_n^2 \rangle
		- \langle \tilde{ \psi}_{n}\rangle^2\big)
	\le \sigma^{2} N L^2 \Big(\frac{36h^2}{25L^2}- \frac{h^4}{L^4}\Big)
\le \frac{36}{25}\sigma^{2}
  L^2.
}$$\end{plain}%
Since in the periodic case we have $\|\Pi\varphi\|_V\le \|\nabla\varphi\|_{L^2}$
for every $\varphi\in\dot H^1$, then
Similarly estimate
\begin{plain}$$\eqalign{
  \trace [A^{1/2}QA^{1/2}]
	&= {\sigma^{2}\over 2}\sum_{d=1}^{2N} \|\gamma_d\|_V^2
	\le {\sigma^{2}}\sum_{n=1}^{N} |\nabla\tilde\psi_n|_{L^2}^2\cr
	&\le c\sigma^2 N=c\sigma^2{L^2\over h^2},
}$$\end{plain}%
where Proposition \ref{prop1} part (vi) has been used in
the final inequality.
\end{proof}

\subsection{The Deterministic Navier-Stokes Equations}\label{subsecclass}
The deterministic two-dimensional incompressible
Navier-Stokes equations, subject to periodic boundary conditions, are
well-posed and possess a compact
finite-dimensional global attractor.
Specifically,
the following result can be found in \cite{constantin1988},
\cite{foias2001}, \cite{robinson2001} and \cite{temam1983}.

\begin{theorem}\label{strong}
  Let $U_{0}\in V$ and $f\in H$. Then \eqref{abstract} has a unique
  strong solution that satisfies
$$U\in C([0,T]; V)\cap L^{2}([0,T]; D(A)),\quad {\rm for\, any}\quad T>0.$$
Moreover, the solution $U$ depends continuously on
$U_{0}$ in the $V$ norm.
\end{theorem}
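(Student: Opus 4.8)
The plan is to follow the classical Galerkin procedure: construct finite-dimensional approximations, derive \emph{a-priori} estimates uniform in the truncation parameter, pass to the limit by compactness, and then obtain uniqueness and continuous dependence from an energy estimate on the difference of two solutions. First I would let $H_m=\mathrm{span}\{e_1,\dots,e_m\}$ be spanned by the first $m$ eigenvectors of the Stokes operator $A$, write $P_m$ for the orthogonal projection of $H$ onto $H_m$, and seek $U_m(t)\in H_m$ solving
\[
\frac{dU_m}{dt}+\nu AU_m+P_m B(U_m,U_m)=P_m f,\qquad U_m(0)=P_m U_0 .
\]
Since on the finite-dimensional space $H_m$ the nonlinearity is a quadratic polynomial, hence locally Lipschitz, the Cauchy--Lipschitz theorem yields a unique local solution; the bounds below rule out finite-time blow-up and extend $U_m$ to all of $[0,T]$.

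For the \emph{a-priori} estimates I would first take the $H$ inner product of the truncated equation with $U_m$, use the orthogonality $\langle B(U_m,U_m),U_m\rangle=0$ from \eqref{B} together with Young's inequality on $\langle f,U_m\rangle$, and integrate to bound $U_m$ uniformly in $L^\infty([0,T];H)\cap L^2([0,T];V)$. The strong estimate is where the periodic setting is essential: pairing the equation with $AU_m$ and invoking the orthogonality property \eqref{B1}, namely $\langle B(U_m,U_m),AU_m\rangle=0$ (valid since $AU_m\in H_m$), makes the cubic term vanish and leaves
\[
\tfrac12\frac{d}{dt}\|U_m\|_V^2+\nu|AU_m|_H^2=\langle f,AU_m\rangle .
\]
Absorbing $\langle f,AU_m\rangle\le \tfrac{\nu}{2}|AU_m|_H^2+\tfrac{1}{2\nu}|f|_H^2$ into the dissipation and integrating then bounds $U_m$ uniformly in $L^\infty([0,T];V)\cap L^2([0,T];D(A))$.

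Next I would observe that the equation furnishes a uniform bound on $dU_m/dt$ in $L^2([0,T];H)$, the term $B(U_m,U_m)$ being controlled in $H$ by a Ladyzhenskaya-type estimate of the form $|B(U_m,U_m)|_H\le C\|U_m\|_V^{3/2}|AU_m|_H^{1/2}$ together with the bounds just obtained. The Aubin--Lions--Simon compactness lemma, applied with $D(A)\hookrightarrow\hookrightarrow V\hookrightarrow H$, then extracts a subsequence converging strongly in $L^2([0,T];V)$, weakly-$\ast$ in $L^\infty([0,T];V)$, and weakly in $L^2([0,T];D(A))$. Strong $L^2([0,T];V)$ convergence is exactly what permits passage to the limit in $B(U_m,U_m)$ through the continuity estimate \eqref{bilinest}, while the linear terms pass by weak convergence; this identifies a strong solution $U$. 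The regularity $U\in L^2([0,T];D(A))$ with $dU/dt\in L^2([0,T];H)$ then gives $U\in C([0,T];V)$ by the Lions--Magenes interpolation theorem, since $V=[D(A),H]_{1/2}$.

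For uniqueness and continuous dependence, I would take two solutions $U,\tilde U$ with data $U_0,\tilde U_0$, set $w=U-\tilde U$, and subtract the equations to obtain $dw/dt+\nu Aw+B(w,U)+B(\tilde U,w)=0$. Pairing with $w$ in $H$, using $\langle B(\tilde U,w),w\rangle=0$ from \eqref{B} and bounding $\langle B(w,U),w\rangle$ through \eqref{bilinest} and Young's inequality, I arrive at $\frac{d}{dt}|w|_H^2\le g(t)\,|w|_H^2$ with $g\in L^1([0,T])$, its integrability following from $U\in L^2([0,T];V)$, so Gr\"onwall's inequality yields uniqueness and continuous dependence in $H$. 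To upgrade continuous dependence to the $V$ norm I would repeat the difference estimate at the level of $Aw$, again using the periodic identities \eqref{B1}--\eqref{B2} to tame the nonlinear contributions, which I expect to be the most delicate bookkeeping. The main obstacle throughout is not any single estimate but securing enough compactness to pass to the limit in the nonlinearity; it is the periodic orthogonality \eqref{B1} that renders the $V$-level estimate clean and thereby makes the two-dimensional strong-solution theory globally well-posed.
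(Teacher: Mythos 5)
Your proposal is correct and follows the classical Galerkin--compactness--energy-estimate route; the paper itself offers no proof of this theorem, citing instead the standard references (Constantin--Foias, Temam, Robinson, Foias--Manley--Rosa--Temam), and your argument is essentially the proof given there, including the key use of the periodic orthogonality $\langle B(v,v),Av\rangle=0$ to close the $V$-level estimate. No gaps of substance; only the $V$-norm continuous dependence is left as a sketch, which is acceptable since it is a routine (if tedious) refinement of the $H$-level Gr\"onwall argument.
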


Let us denote by $G$ the Grashof number
\begin{equation}\label{G}
  G=\frac{|f|_H}{\nu^{2}\lambda_{1}},
\end{equation}
which is a dimensionless physical parameter.  We now give bounds on
solutions $U$ of \eqref{abstract} that will be used in our later
analysis.  With the exception of inequality \eqref{boundA} these
estimates appear in the references listed above. The improved
estimate in \eqref{boundA} is given in \cite{FJLRYZ}.

\begin{theorem}\label{Bounds} Let $T>0$, and let $G$ be the
Grashof number given by \eqref{G}. There exists
  a time $t_{0}$, which depends on $U_0$, such that for all $t\geq t_{0}$
  we have
  \begin{equation}\label{boundH}
    |U(t)|_H^{2}\leq 2\nu^{2}G^{2}\quad {\rm and}\quad
    \int_{t}^{t+T}\|U(\tau)\|_{V}^{2}d\tau\leq 2(1+T\nu\lambda_{1})\nu G^{2}.
  \end{equation}
  Furthermore, we also have
  \begin{equation}\label{boundV}
    \|U(t)\|_V^{2}\leq 2\nu^{2}\lambda_{1}G^{2}\quad {\rm and}\quad
    \int_{t}^{t+T}|AU(\tau)|_{H}^{2}d\tau\leq 2(1+T\nu\lambda_{1})\nu\lambda_{1}G^{2}.
  \end{equation}
  Moreover,
  \begin{equation}\label{boundA}
    |AU(t)|_H^{2}\leq c\nu^{2}\lambda_{1}^2(1+G)^4.
  \end{equation}
\end{theorem}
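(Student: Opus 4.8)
The plan is to derive all three pairs of bounds as classical a priori energy estimates for \eqref{abstract}: the pointwise-in-time bounds will follow from Gronwall-type arguments that capture the exponential decay of the transient (which is why the coefficients $2\nu^2G^2$ and $2\nu^2\lambda_1G^2$ appear, rather than the sharp limiting values $\nu^2G^2$ and $\nu^2\lambda_1G^2$), while the time-averaged bounds will follow by integrating the corresponding differential inequalities over $[t,t+T]$.

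First I would establish \eqref{boundH}. Taking the $H$ inner product of \eqref{abstract} with $U$ and using the orthogonality $\langle B(U,U),U\rangle_{V',V}=0$ from \eqref{B}, together with Cauchy--Schwarz, Young's inequality and the Poincar\'e inequality \eqref{poincare1} applied to the forcing term $\langle f,U\rangle$, yields $\tfrac{d}{dt}|U|_H^2+\nu\lambda_1|U|_H^2\le |f|_H^2/(\nu\lambda_1)$. By Gronwall's inequality the transient decays exponentially, so $\limsup_{t\to\infty}|U(t)|_H^2\le |f|_H^2/(\nu^2\lambda_1^2)=\nu^2G^2$ by the definition \eqref{G}; hence there is a time $t_0$, depending on $U_0$, after which $|U(t)|_H^2\le 2\nu^2G^2$. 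Integrating the refined inequality $\tfrac12\tfrac{d}{dt}|U|_H^2+\tfrac{\nu}{2}\|U\|_V^2\le |f|_H^2/(2\nu\lambda_1)$ over $[t,t+T]$ and inserting the pointwise bound on $|U(t)|_H^2$ then gives the stated time-average bound on $\int_t^{t+T}\|U\|_V^2\,d\tau$.

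For \eqref{boundV} I would take the $H$ inner product of \eqref{abstract} with $AU$. Here the periodic orthogonality property $\langle B(U,U),AU\rangle=0$ from \eqref{B1} is essential, since it eliminates the nonlinear term entirely; this computation is justified because Theorem \ref{strong} guarantees $U(\tau)\in D(A)$ for almost every $\tau$. After Young's inequality on $\langle f,AU\rangle$ and the Poincar\'e inequality \eqref{poincare2}, I obtain $\tfrac{d}{dt}\|U\|_V^2+\nu\lambda_1\|U\|_V^2\le |f|_H^2/\nu$, and the same Gronwall-plus-integration scheme as above produces both the pointwise bound $\|U(t)\|_V^2\le 2\nu^2\lambda_1G^2$ for $t\ge t_0$ and the time-average bound on $\int_t^{t+T}|AU|_H^2\,d\tau$.

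The hard part is the $D(A)$ bound \eqref{boundA}, which is the improved estimate of \cite{FJLRYZ} and where the exact power $(1+G)^4$ must be tracked carefully. My plan is to obtain a differential inequality for $|AU|_H^2$ (for instance by differentiating \eqref{abstract} in time and testing with $U_t$, which avoids requiring $f\in D(A)$, or by testing with $A^2U$) and then to apply the uniform Gronwall lemma, using the already-established time-average bound on $\int_t^{t+T}|AU|_H^2\,d\tau$ from \eqref{boundV} to control both the integrating factor and the forcing over $[t,t+T]$. The main obstacle is the nonlinear term, which must be estimated so that only the correct power of the Grashof number survives. For this I would combine the Ladyzhenskaya inequality \eqref{inter} and the bilinear estimate \eqref{bilinest} with the Br\'ezis--Gallouet inequality \eqref{brezis}; the latter is precisely what converts the logarithmically supercritical control of $\|U\|_{\infty}$ (equivalently, of $B(U,U)$ in $H$) into an estimate that closes in $|AU|_H^2$. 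The logarithmic factors are absorbed into the constant $c$, and careful bookkeeping of the $\nu$, $\lambda_1$ and $G$ dependence yields the sharpened $(1+G)^4$ growth.
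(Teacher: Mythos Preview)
The paper does not prove Theorem~\ref{Bounds}; it simply records the bounds as known, citing \cite{constantin1988,foias2001,robinson2001,temam1983} for \eqref{boundH} and \eqref{boundV} and \cite{FJLRYZ} for the improved estimate \eqref{boundA}. Your derivation of \eqref{boundH} and \eqref{boundV}---energy inequality plus Poincar\'e and Gronwall for the pointwise bounds, and integration over $[t,t+T]$ for the time-averaged bounds, using the periodic orthogonality $\langle B(U,U),AU\rangle=0$ from \eqref{B1} for the $V$ level---is exactly the textbook argument in those references, so that part is correct and matches the cited literature.

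For \eqref{boundA} there is a gap. First, the two strategies you list are not interchangeable: testing \eqref{abstract} with $A^2U$ requires $f\in V$, which is not assumed here, so only the $U_t$ route is actually available under the hypothesis $f\in H$. Second, and more importantly, the classical uniform-Gronwall argument via $U_t$ together with Agmon or Br\'ezis--Gallouet does yield a uniform-in-time bound on $|AU|_H^2$, but the resulting dependence on $G$ is typically polynomial of higher degree, or even exponential, rather than the sharp $(1+G)^4$ stated. Your assertion that ``careful bookkeeping \dots\ yields the sharpened $(1+G)^4$ growth'' is precisely the nontrivial content of \cite{FJLRYZ}, whose title (``Analyticity with higher norm estimates'') already suggests the techniques there go beyond the energy/interpolation toolkit you invoke. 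So while your outline would produce \emph{some} uniform bound on $|AU|_H^2$, it does not substantiate the specific constant and exponent in \eqref{boundA}, and you should either cite \cite{FJLRYZ} for that step (as the paper does) or carry out the argument explicitly enough to exhibit the $(1+G)^4$ dependence.
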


\section{The Data Assimilation Algorithm}\label{secwp}

Let $U$ be the strong solution of \eqref{abstract} given by Theorem
\ref{strong}, and let $R_{h}$ be an interpolation operator satisfying
either \eqref{R1} or \eqref{R2}.
Suppose the only knowledge we have about $U$ is from the noisy
observational measurements $R_{h}(U(t))+\xi(t)$, that have been
continuously recorded for times $t\in [0,T]$.
Our goal in this section is to show that the data assimilation
algorithm given by equations \eqref{abapprox} for computing
the approximating solution $u$ are well posed.

The proof combines the well-posedness results
for the noise-free data data assimilation equations \eqref{noerror},
appearing in \cite{olson2013},
with techniques from \cite{flandoli1994}. Similar results can be found in  \cite{daprato1992} for stochastically forced partial differential equations.
Namely, we have the following two theorems.

\begin{theorem}\label{existsH}
  Suppose $U$ is the strong solution of \eqref{abstract}
	given by Theorem~\ref{strong},
  where $U_{0}\in V$ and $f\in H$. Moreover, assume
  $R_{h}\colon [\dot H^1]^2\to [\dot L^{2}]^2$
	satisfies~\eqref{R1} and that $2\mu c_1h^{2}\le \nu$.  Then
  for any $u_0\in H$ and $T>0$, there exists a unique stochastic
  process solution $u$ of equation \eqref{abapprox} in the following
sense: $\mathbb{P}$-a.s.
$$u\in C([0,T]; H)\cap L^{2}([0,T]; V)$$
and
\begin{align}\label{var}
\langle u(t),\varphi\rangle &+\int_{0}^{t} \langle u(\tau), A\varphi\rangle d\tau
-\int_{0}^{t} \langle B(u(\tau), \varphi), u(\tau)\rangle d\tau=\langle u_{0},\varphi\rangle\nonumber \\
&+\int_{0}^{t} \langle f(\tau),\varphi\rangle d\tau -\mu \int_{0}^{t} \langle  R_{h} (u(\tau)-U(\tau)), \varphi\rangle d\tau
+\langle W(t),\varphi\rangle
\end{align}
for all $t\in [0,T]$ and for all $\varphi\in D(A)$.
Moreover,
\begin{equation}\label{u-itoH}
  \E\left(\sup_{0\leq t\leq T}|u(t)|_{H}^{2}
	+\nu\int_{0}^{T}\|u(t)\|_{V}^{2}dt \right)<\infty.
\end{equation}
\end{theorem}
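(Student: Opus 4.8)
The plan is to remove the additive stochastic forcing by subtracting off the associated Ornstein--Uhlenbeck process, thereby reducing \eqref{abapprox} to a pathwise random partial differential equation that can be solved for almost every $\omega$ using the deterministic theory of \cite{olson2013}. Concretely, let $z$ be the stochastic convolution solving $dz+\nu A z\,dt=\mu\,dW$ with $z(0)=0$, namely $z(t)=\mu\int_0^t e^{-\nu A(t-s)}\,dW(s)$. Since $W$ is an $[\dot L^2]^2$-valued $Q$-Brownian motion with $\trace[Q]\le\sigma^2 L^2$ by Proposition~\ref{traceH}, the smoothing of the analytic semigroup $e^{-\nu At}$ combined with the trace-class property yields $z\in C([0,T];H)\cap L^2([0,T];V)$ $\mathbb{P}$-a.s., with finite second moments. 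I would stress that subtracting $\mu W$ itself is \emph{not} available here, because under \eqref{R1} the process $W$ takes values only in $H$ and $\nu A(\mu W)$ is ill-defined; the semigroup built into $z$ is precisely what supplies the missing regularity. Similar stochastic-convolution machinery underlies \cite{flandoli1994} and \cite{daprato1992}.

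Setting $v=u-z$, the process $u$ solves \eqref{abapprox} if and only if $v$ solves, pathwise,
$$
\frac{dv}{dt}+\nu Av+B(v+z,v+z)=f-\mu\,\Pi R_h(v+z-U),\qquad v(0)=u_0,
$$
which for each fixed $\omega$ is a two-dimensional Navier--Stokes system with the known datum $z(\cdot,\omega)$ and the linear feedback term. First I would run a Galerkin scheme and derive the basic energy estimate by pairing with $v$. The bilinear contribution reduces, via $\langle B(v+z,v),v\rangle=0$ from \eqref{B}, to the terms $\langle B(v,z),v\rangle$ and $\langle B(z,z),v\rangle$; these are controlled by \eqref{bilinest} and Young's inequality, the coupling $\langle B(v,z),v\rangle=-\langle B(v,v),z\rangle$ producing after absorption an $L^1_t$ Gronwall coefficient built from $|z|_H$ and $\|z\|_V$. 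The feedback term I would split using $R_h\varphi=\varphi-(\varphi-R_h\varphi)$ together with the approximation property \eqref{R1}; the hypothesis $2\mu c_1 h^2\le\nu$ is exactly what guarantees the resulting term is absorbed into half of the viscous dissipation $\nu\|v\|_V^2$, while the $U$-dependent contribution is treated as a bounded forcing via the a-priori bounds of Theorem~\ref{Bounds}. Aubin--Lions compactness then passes the limit through the nonlinearity and yields $v\in C([0,T];H)\cap L^2([0,T];V)$; measurability and adaptedness of $v$, hence of $u=v+z$, follow from continuous dependence on the datum $z$.

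For uniqueness I would take the difference of two solutions, estimate $\tfrac12\tfrac{d}{dt}|v_1-v_2|_H^2$, dispose of the nonlinear difference by \eqref{bilinest} and of the feedback difference by \eqref{R1} and $2\mu c_1 h^2\le\nu$, and close with Gronwall's inequality. Finally, the moment bound \eqref{u-itoH} I would obtain by applying the It\^o formula to $|u|_H^2$ directly on \eqref{abapprox}: the It\^o correction contributes $\mu^2\trace[Q]$, finite by Proposition~\ref{traceH}, whereas the stochastic integral $\mu\int_0^t\langle u,dW\rangle$ is handled, after taking the supremum in time, by the Burkholder--Davis--Gundy inequality and absorbed into the left-hand side.

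I expect the main obstacle to be organizational rather than conceptual: once the Ornstein--Uhlenbeck subtraction is in place, the genuinely stochastic features are confined to the regularity and measurability of $z$ and to the $\trace[Q]$ bookkeeping in the final moment estimate, while everything else rests on the deterministic 2D Navier--Stokes theory plus the feedback structure. The most delicate analytic point is ensuring the energy estimate for $v$ closes uniformly in the Galerkin level despite the term $B(v,z)$ coupling $v$ to the merely $L^2([0,T];V)$-regular convolution $z$; verifying that $2\mu c_1 h^2\le\nu$ leaves enough dissipation to simultaneously absorb this coupling and the feedback remainder is where I would concentrate the care.
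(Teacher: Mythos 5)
Your proposal follows essentially the same route as the paper: both subtract the stochastic convolution $z(t)=\mu\int_0^t e^{-\nu A(t-s)}\,dW(s)$, reduce \eqref{abapprox} to a pathwise random Navier--Stokes system for $u-z$ solved by Galerkin approximation with the bilinear terms controlled via the orthogonality relations \eqref{B} and the Ladyzhenskaya inequality, absorb the feedback term into the dissipation using \eqref{R1} and $2\mu c_1h^2\le\nu$, and obtain \eqref{u-itoH} by applying the It\^o formula to $|u|_H^2$ together with the Burkholder--Davis--Gundy inequality. The argument is correct and matches the paper's proof in all essentials.
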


\begin{theorem}\label{existsV}
  Suppose $U$ is the strong solution of \eqref{abstract}
	given by Theorem~\ref{strong},
  where $U_{0}\in V$ and $f\in H$. Moreover, assume
  $R_{h}\colon [\dot H^2]^2\to [\dot H^{1}]^2$
	satisfies~\eqref{R2} and that $2\mu h^2\max (c_1,\sqrt{c_2})\le \nu$.
Then
  for any $u_0\in V$ and $T>0$, the stochastic process solution
of equation \eqref{abapprox}, given in the previous theorem
is such that
$\mathbb{P}$-a.s.
$$u\in C([0,T]; V)\cap L^{2}([0,T]; D(A))$$
and
\begin{equation}\label{u-itoV}
  \E\left(\sup_{0\leq t\leq T}\|u(t)\|_{V}^{2}
	+\nu\int_{0}^{T}|Au(t)|_{H}^{2}dt \right)<\infty.
\end{equation}
\end{theorem}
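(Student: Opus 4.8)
The plan is to upgrade the $H$-valued solution produced by Theorem~\ref{existsH} to a $V$-valued one by establishing a uniform {\it a-priori\/} bound in $V$ along a Galerkin scheme and then passing to the limit. Working at the level of a Galerkin truncation $u_m$, valued in the span of $e_1,\dots,e_m$ and solving the finite-dimensional projection of \eqref{abapprox}, keeps every quantity below finite, which is essential because the estimate we derive will, at an intermediate stage, be absorbed into itself. Since the unique $H$-solution of Theorem~\ref{existsH} coincides with the limit of this scheme, it inherits the bound \eqref{u-itoV}.

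The core computation is the It\^o formula applied to $\|u_m(t)\|_V^2=|A^{1/2}u_m(t)|_H^2$. Because $\Phi(v)=|A^{1/2}v|_H^2$ has first derivative $2Av$ and constant second derivative $2A$, we obtain
\begin{align*}
d\|u_m\|_V^2 &= 2\langle Au_m,\ -\nu Au_m - B(u_m,u_m) + f - \mu\Pi R_h(u_m-U)\rangle\,dt\\
&\quad + 2\mu\langle Au_m, dW\rangle + \mu^2\trace[A^{1/2}QA^{1/2}]\,dt.
\end{align*}
Two structural facts make this tractable. First, the periodic orthogonality \eqref{B1} gives $\langle B(u_m,u_m),Au_m\rangle=0$ (and the Galerkin projection does not disturb this, since $Au_m$ lies in the truncation space), so the nonlinear term disappears entirely. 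Second, the It\^o correction $\mu^2\trace[A^{1/2}QA^{1/2}]$ is finite precisely because $W$ is $[\dot H^1]^2$-valued, and is controlled by \eqref{tr-Q-V}; this is exactly where the smoothing assumption on the range of $R_h$ is used.

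It remains to dominate the nudging and forcing contributions. Writing $-2\mu\langle Au_m, R_h(u_m-U)\rangle = -2\mu\|u_m\|_V^2 + 2\mu\langle Au_m, U\rangle + 2\mu\langle Au_m, (u_m-U)-R_h(u_m-U)\rangle$ and estimating the last term by the square root of \eqref{R2}, the top-order piece is bounded by $2\mu\sqrt{c_2}h^2|Au_m|_H\|u_m-U\|_{H^2}$; the hypothesis $2\mu h^2\max(c_1,\sqrt{c_2})\le\nu$ is tailored so that this, after splitting $\|u_m-U\|_{H^2}\lesssim |Au_m|_H + |AU|_H$, is absorbed into the dissipation $2\nu|Au_m|_H^2$. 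The remaining cross terms, the $\sqrt{c_1}h\|u_m-U\|_{H^1}$ piece, the forcing term $2\langle Au_m, f\rangle$, and the contributions involving $U$ are handled by Young's inequality, leaving a residual of the form $C(\|u_m\|_V^2 + |f|_H^2 + \|U\|_V^2 + |AU|_H^2 + \mu^2\trace[A^{1/2}QA^{1/2}])$; here the $|AU|_H^2$ term is time-integrable because $U\in L^2([0,T];D(A))$ by Theorem~\ref{strong}, and all $U$-dependent terms are finite by Theorem~\ref{Bounds}.

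Finally, integrate on $[0,t]$, take the supremum over $t\in[0,T]$, and take expectations. The only delicate point is the martingale $M(t)=2\mu\int_0^t\langle Au_m, dW\rangle$: its quadratic variation satisfies $d\langle M\rangle \le 4\mu^2\|u_m\|_V^2\,\trace[A^{1/2}QA^{1/2}]\,dt$, so the Burkholder--Davis--Gundy inequality gives $\E\sup_{[0,T]}|M|\le C\mu(\trace[A^{1/2}QA^{1/2}]\,T)^{1/2}(\E\sup_{[0,T]}\|u_m\|_V^2)^{1/2}$, and Young's inequality lets one absorb $\tfrac12\E\sup_{[0,T]}\|u_m\|_V^2$ back into the left-hand side -- the step that forces the Galerkin (or stopping-time) localization, since the quantity being absorbed must be known {\it a priori\/} finite. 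A Gronwall argument then yields a bound on $\E(\sup_{[0,T]}\|u_m\|_V^2 + \nu\int_0^T|Au_m|_H^2\,dt)$ uniform in $m$. Passing to the limit by weak and weak-$\ast$ compactness together with lower semicontinuity of the norms, and identifying the limit with the unique solution of Theorem~\ref{existsH}, establishes \eqref{u-itoV} and the path regularity $u\in C([0,T];V)\cap L^2([0,T];D(A))$. I expect the main obstacle to be the bookkeeping that makes the $\mu$-constraint exactly absorb the top-order nudging term while simultaneously controlling the BDG term, i.e., ensuring the two absorptions -- into $2\nu|Au_m|_H^2$ and into $\sup\|u_m\|_V^2$ -- leave enough room.
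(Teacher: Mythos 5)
Your proposal is correct and is essentially what the paper means by ``similar arguments as used in Theorem~\ref{existsH}'': the paper gives no details here, and your sketch supplies exactly the right ones --- It\^o's formula for $\|u_m\|_V^2$ at the Galerkin level, the periodic orthogonality \eqref{B1} to kill the nonlinearity, the hypothesis $2\mu h^2\max(c_1,\sqrt{c_2})\le\nu$ to absorb the top-order part of the nudging term coming from \eqref{R2} into $2\nu|Au_m|_H^2$, finiteness of $\trace[A^{1/2}QA^{1/2}]$ from the $[\dot H^1]^2$-valued noise, and BDG plus Gronwall with the absorption of $\tfrac12\E\sup\|u_m\|_V^2$ justified by the finite-dimensional truncation. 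The one point worth tightening is the claim $u\in C([0,T];V)$: weak/weak-$\ast$ compactness and lower semicontinuity only give $L^\infty(0,T;V)\cap L^2(0,T;D(A))$ directly, and the cleanest way to recover continuity in $V$ is the pathwise decomposition $u=\tilde u+z$ from the proof of Theorem~\ref{existsH}, where $z$ now has $V$-valued (indeed better) continuous trajectories because $W$ is $[\dot H^1]^2$-valued, and $\tilde u$ solves a random PDE to which the classical deterministic regularity theory applies.
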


\begin{proof}[Proof of Theorem \ref{existsH}]
The proof of this theorem is based on a pathwise argument.
We proceed along the lines of \cite{flandoli1994} in which a
similar proof appears except without the function $U$ and
the additional linear term.
Consider the auxiliary process $z$ which satisfies
\begin{equation}\label{zeq}
  dz+\nu Az dt= \mu dW,\qquad z(0)=0.
\end{equation}
It is known, see \cite{daprato1992}, that
$$
	z(t)=\mu \int_{0}^t e^{-\nu A(t-\tau)} dW(\tau)
$$
is a stationary $D(A^{1/2})$-valued
ergodic solution to \eqref{zeq}
with continuous trajectories.
In particular, we have
\begin{plain}$$\eqalign{
	\E \|z(t)\|_{D(A^{1/2})}^2
	\le{\mu^2\sigma^2\over 2\nu} \trace[Q].
}$$\end{plain}%
This estimate may be obtained by writing
$$
	z=\sum_{j=1}^\infty z_j e_j\quad
\hbox{and}\quad
	W=\sum_{j=1}^\infty W_j e_j
	=\sum_{j=1}^\infty \bigg(\sum_{d=1}^D \gamma_{d,j} b_d\bigg) e_j
$$
where $z_j(t)=\langle z(t),e_j\rangle$
and $\gamma_{d,j}=\langle\gamma_d,e_j\rangle$.

Then,
$$
	z_j(t)=\mu \int_{0}^t e^{-\nu\lambda_j (t-\tau)} d W_j(\tau)
		=\mu \sum_{d=1}^D \gamma_{d,j}\int_{0}^t e^{-\nu\lambda_j(t-\tau)} db_d(\tau).
$$
Using the independence of the $b_d$'s and the It\^o isometry, it follows that
\begin{plain}$$\eqalign{
	\E |z_j(t)|^2
	&=\mu^2\sum_{d=1}^D \gamma_{d,j}^2 \E
		\bigg|\int_{0}^t e^{-\nu\lambda_j(t-\tau)} db_d(\tau)\bigg|^2\cr
	&={\mu^2\sigma^2\over 2}\sum_{d=1}^{D} \gamma_{d,j}^2
		\int_{0}^t e^{-2\nu\lambda_j(t-\tau)}d\tau
	\le {\mu^2\sigma^2\over 4\nu\lambda_j}\sum_{d=1}^D \gamma_{d,j}^2.
}$$\end{plain}%
Therefore, provided $2\alpha\le 1$ we have
\begin{plain}$$\eqalign{
	\E \|z\|_{D(A^\alpha)}^2 &= \sum_{j=1}^\infty \lambda_j^{2\alpha}
		\E |z_k|^2\cr
	&\le {\mu^2\sigma^2\over 4\nu} \sum_{d=1}^D \sum_{j=1}^\infty
		{1\over \lambda_j^{1-2\alpha}}\gamma_{d,j}^2
	\le{\mu^2
	\over 2\nu\lambda_1^{1-2\alpha}} \trace[Q].
}$$\end{plain}%

Now, using the change of variable $\tilde{u}=u-z$,
we find that $\tilde{u}$ is
solution of the (random) differential equation
\begin{equation}\label{random}
  \frac{d}{dt}\tilde{u}+\nu A\tilde{u}+B(\tilde u+z, \tilde u+z)+\mu \Pi\, R_{h}(\tilde u+z)=\tilde{f},
\end{equation}
where $\tilde{f}=f+\mu \Pi\, R_{h}(U)$ and $\tilde u(0)=\tilde u_0=u_0$.

Theorem \ref{strong} implies that $U\in C([0,T]; V)$. Hence, using
\eqref{R1} and the Poincar\'e inequality
$$
  |\Pi\, R_{h}(U)|_{H}
	\leq \|U-R_{h}(U)\|_{L^2}+|U|_{H}
	\leq \big(\sqrt{c_1}h+\lambda_{1}^{-1/2}\big)\|U\|_{V}
$$
which implies that $\Pi\, R_{h}(U)\in C([0,T]; H)$.
Therefore $\tilde f\in C([0,T];H)$.

For every
$\omega\in\Omega$, there exists a unique weak solution $\tilde{u}$ of
equation \eqref{random} and it depends continuously, in $C([0,T];
H)\cap L^{2}(0,T; V)$ norms, for any given $T>0$, on the initial
condition $\tilde u_0=u_0$ in $H$. A full rigorous proof of this
statement is very long, but at the same time it is very
classical. Similar proofs are detailed, for instance in
\cite{flandoli1994} for the stochastically forced Navier--Stokes
equations and in \cite{constantin1988} or
\cite{temam2001} in the case of the classical Navier-Stokes equations
({\it i.e.}, when $z=0$). The rigorous proof is based on the
Galerkin approximation procedure and then passing to the limit
using the appropriate compactness theorems.
We state the necessary {\it a-priori\/} estimates here.

Take the inner product of equation \eqref{random} by $\tilde{u}$
\[
\Big\langle \frac{d\tilde{u}}{dt},\tilde{u}\Big\rangle+\nu\langle
A\tilde {u},\tilde{u} \rangle=- \langle B(\tilde{u}+z,
\tilde{u}+z),\tilde{u} \rangle -\mu \langle \Pi\,
R_{h}(\tilde{u}+z),\tilde{u}\rangle + \langle
\tilde{f},\tilde{u}\rangle.\]
Using Lemma \ref{B2lem} and Young's inequality, we get that
\begin{align}
  |\langle B (\tilde{u}+z,\tilde{u}+z),\tilde{u}\rangle| &
  =|\langle B(\tilde{u},\tilde{u}),z\rangle
  +\langle B(z,\tilde{u}),z \rangle|\nonumber\\
  & \leq C_{\rm L}\left( \|\tilde{u}\|_{L^4}^{2}\Vert z\Vert_{V}+\Vert
    \tilde{u}\Vert_{V}\|z\|_{L^4}^{2}\right)  \nonumber\\
  & \leq \frac{\nu}{2}\|\tilde{u}\|_{V}^{2}
  +\frac{C_{\rm L}^{2}}{\nu}\left(|\tilde{u}|_{H}^{2}
    \|z\|_{V}^{2}+|z|_{H}^{2}\|z\|_{V}^{2}\right).
\end{align}
For the other term we apply the Cauchy-Schwarz, Young's
and Poincar\'e
inequalities along with the approximation
property \eqref{R1} to obtain
\begin{plain}\begin{align*}
    -\mu \langle \Pi\, R_{h}(\tilde{u}+z),\tilde{u}\rangle &
    =-\mu \langle \, R_{h}(z),\tilde{u}\rangle
    -\mu \langle \, R_{h}(\tilde{u}),\tilde{u} \rangle\\
    &\leq \mu |z-R_{h}(z)|_{H}|\tilde{u}|_{H} +\mu
    |z|_{H}|\tilde{u}|_{H} \\
	&\qquad\qquad+\mu \langle \tilde{u}-\,
    R_{h}(\tilde{u}),\tilde{u} \rangle
    -\mu|\tilde{u}|_{H}^{2}\\
    &\leq \frac{\mu}{2}|\tilde{u}|_{H}^{2}
    +{\frac{\mu}{2}}\left(|z|_{H}^{2}+ |z-R_{h}(z)|^{2}_{H}\right)\\
    &\qquad\qquad
		+\frac{\mu}{2}|\tilde{u}-\, R_{h}(\tilde{u})|_{H}^{2}
    +\frac{\mu}{2}|\tilde{u}|_{H}^{2}-\mu |\tilde{u}|_{H}^{2}\\
    &\leq
   { \frac{\mu}{2}}\left(\lambda_{1}^{-1}+c_1h^{2}\right)\|z\|_{V}^{2}
    +\frac{c_1h^{2}\mu}{2}\|\tilde{u}\|_{V}^{2}.
  \end{align*}\end{plain}%
Also,
\begin{plain}$$
	\langle\tilde f,\tilde u\rangle
	\le|\tilde f|_H|\tilde u|_H
	\le\lambda_1^{-1/2}|\tilde f|_H\|\tilde u\|_V
	\le{1\over\nu\lambda_1}|\tilde f|_H^2+{\nu\over4}\|\tilde u\|_V^2.
$$\end{plain}%
Hence, since we chose $h$ and $\mu$ such that $\nu\ge 2c_1h^{2}\mu$, we get
that
\begin{plain}\begin{align*}\label{v-intermediate}
  \frac{1}{2}\frac{d}{dt}
  |\tilde{u}|_{H}^{2}+\frac{\nu}{2}\|\tilde
  {u}\|_{V}^{2}&\leq \frac{\mu}{2}
	\left(\lambda_{1}^{-1}+c_1h^{2}\right)\|z\|_{V}^{2}\\
  &+\frac{C_{\rm L}^{2}}{\nu}\left(|\tilde{u}|_{H}^{2}
    \|z\|_{V}^{2}+|z|_{H}^{2}\|z\|_{V}^{2}\right)
  +\frac{1}{\nu\lambda_1}|\tilde{f}|_{H}^{2}.
\end{align*}\end{plain}%
From the above we have
\begin{plain}$$\eqalign{
	{d\over dt}|\tilde u|_H^2
	+\nu\|\tilde u\|_V^2
	\le {\mu}\big(\lambda_1^{-1}+c_1h^2\big)\|z\|_V^2
		+{2\over\nu\lambda_1}|\tilde f|_H^2\cr
	+
{2C_{\rm L}^2\over\nu}\|z\|_V^2|\tilde u|_H^2
		+
{2C_{\rm L}^2\over\nu}|z|_H^2\|z\|_V^2
.
}$$\end{plain}%
Since $\tilde f$ and $z$ are in $C([0,T];V)$, then by
Gronwall's Lemma and the previous
estimates on $z$ and $\tilde{f}$ to get that
\begin{equation*}
  \sup_{t \in [0,T]} |\tilde u(t)|_{H}^{2}
		\leq C,\quad {\rm and}\quad
  \int_{0}^{T}\|\tilde {u}(\tau)\|_{V}^{2} d\tau\leq C.
\end{equation*}
Since, $u=\tilde{u}+z$, we deduce from the properties of
$z$ that $\mathbb{P}$-a.s.
$$u\in C([0,T]; H)\bigcap L^{2}([0,T]; V).$$
The rigorous justification to the fact
that the process $u$ is adapted follows from the
limiting procedure of adapted processes (via the Galerkin
approximation).

Let us sketch the proof of \eqref{u-itoH}. As usual, these calculations
are performed in a first step on the Galerkin approximation and then
in a second step the estimates for the solution $u$ are obtained by a
limiting procedure. But for simplicity, we only sketch them for $u$. For
similar estimates, see \cite{bessaih-ferrario2014, millet2010}

Using It\^o formula to $|u(t)|_{H}^2$, where $u(t)$ is solution of
\eqref{abapprox}, we get that
\begin{equation*}
d|u(t)|_{H}^2=2\langle u(t), du(t)\rangle +\mu^2\trace[Q].
\end{equation*}
Then, using  assumption \eqref{B} and integrating over $(0,t)$, we get
\begin{align}\label{u}
\sup_{0\leq t\leq T}|u(t)|_{H}^2&+2\nu\int_{0}^{T}\|u(\tau)\|_{V}^2d\tau
	=  |u_{0}|_{H}^2 +
 \int_{0}^{T} \langle f,  u(\tau) \rangle d\tau\nonumber\\
&-2\mu\int_{0}^{T}\langle R_{h}(u(\tau)-U(\tau)), u(\tau)\rangle d\tau\nonumber\\
& +2\mu\sup_{0\leq t\leq T}\int_{0}^{t} \langle u(\tau), dW(\tau) \rangle
	+\mu^2 T \trace[Q].
\end{align}

Using the Burkholder-Gundy-Davis inequality (cf. \cite{daprato1992}) on the martingale  term in the
right-hand side of \eqref{u}
\begin{align*}
2\mu \E\left( \sup_{0\leq t\leq T}\int_{0}^{t} \langle u(\tau), dW(\tau) \rangle \right)
&\leq 2\mu \sqrt{\trace[Q]}\E \sqrt{ \int_{0}^{t} |u(\tau)|_{H}^2 d\tau}\\
&\leq 2\mu \E\sup_{0\leq t\leq T}|u(t)|_{H} \sqrt{T\trace[Q]}\\
&\leq \frac{1}{2}\E\sup_{0\leq t\leq T}|u(t)|_{H}^2+ 2\mu^2T \trace[Q].
\end{align*}

On the other hand, using the approximation \eqref{R1} and the Poincar\'e
inequality, we get that
\begin{align*}
-2\mu\langle R_{h}(u&-U), u\rangle \\
&\leq 2\mu |u-R_{h}(u)|_{L^2} |u|_H-2\mu |u|_{H}^2+2\mu |R_{h}(U)|_{L^2}|u|_H\\
&\leq 2\mu |u-R_{h}(u)|_{L^2}^2-\mu |u|_{H}^2 +2\mu |R_{h}(U)|_{L^2}^2\\
&\leq 2\mu c_{1}h^2 \|u\|_V^2-\mu |u|_{H}^2
	+2\mu\left( |U-R_{h}(U)|_{L^2}^2+|U|_H^2\right)\\
&\leq 2\mu c_{1}h^2 \|u\|^2-\mu |u|_{H}^2 +2\mu(c_{2}h^2+\lambda_{1}^{-1})
	\|U\|_V^2.
\end{align*}
Since $2\mu c_{1}h^2\le \nu$, combining the previous estimates with
the Gronwall lemma one obtains
$$\E \left(\sup_{0\leq t\leq T}|u(t)|_{H}^2\right) \leq
C\left(|u_{0}|_{H}, T, \mu, \nu, h, \lambda_{1}, \trace[Q], \|U\|_{V}\right).$$
Using again this estimate in \eqref{u} finishes the proof.
\end{proof}

\begin{proof}[Proof of Theorem \ref{existsV}]
Similar arguments as used in Theorem \ref{existsH} may be used
to prove this theorem.
\end{proof}

\section{Main Results}\label{secmain}

Our goal is to prove that $u$, the solution of \eqref{abapprox},  approximates
the true solution $U$ of \eqref{abstract}, when $t\to \infty$, to within some
tolerance depending on the error in the observations.
Upon setting $v=U-u$ this is equivalent to showing that $v$
is small.
Note that from \eqref{abstract} and \eqref{abapprox} the evolution of $v$ is governed by the equation
\begin{plain}\begin{equation}\label{v}\eqalign{
      dv+\big[\nu A v+B(U,v)+B(v,U)-&B(v,v)\big]dt\cr
		&=-\mu \Pi\, R_{h}(v) dt+\mu \Pi \, dW,
}\end{equation}\end{plain}%
with $v_0 \in V$ is chosen arbitrary.
We now look for conditions on $h$ and $\mu$
such that the feedback term, $R_h$, on the right-hand side of this
equation, which is stabilizing the coarse scales, together
with the viscous term, which is stabilizing the fine scales,
controls the growth of $v$, which is due to the
unstable nature of this kind of nonlinear dynamical system.

Section \ref{subsecm1} studies interpolant
observables which satisfy \eqref{R1} and, in particular, those
 come from finite volume elements.
Section \ref{subsecm2} studies interpolant
observables which satisfy \eqref{R2} and, in particular, those
which come from nodal observations.

\subsection{Observations of Volume Elements}\label{subsecm1}

This section first proves a general theorem on interpolant
observables that satisfy \eqref{R1}.  This result is then
applied to obtain explicit estimates when the observational
measurements arise, for example, from volume elements.
The same result holds for other kinds of
observables that satisfy \eqref{R1}, such as
Fourier modes and the interpolants investigated in \cite{Holst-Titi}.

\begin{theorem}\label{main1} Assume that $U$ is a strong solution of
  \eqref{abstract}, that $R_{h}$
  satisfies assumption \eqref{R1} and that $W$ is a
$[\dot L^2]^2$-valued $Q$-Brownian motion.
Assume that $\mu$ is large enough, and $h$ is small enough
such that
\begin{plain}$$
	{1\over h^2}\ge {2c_1\mu\over\nu}\ge 8 c_1 C_{\rm L}^2\lambda_1 G^2.
$$\end{plain}%
where $c_{1},\ C_{L}$ are respectively
given in \eqref{R1} and \eqref{inter}.
Then
the solution $u$ of \eqref{abapprox} given by Theorem
	\ref{existsH} satisfies
\begin{equation*}
  \limsup_{t\rightarrow\infty}\E(|u(t)-U(t)|_{H}^{2})
	\le \mu\trace[Q].
\end{equation*}
Moreover,
\begin{plain}\begin{equation}\label{vavg}
	\limsup_{t\to\infty}
{\nu\over T}\int_{t}^{t+T} \E (\|u(\tau)-U(t)\|_V^2)d\tau
	\le \Big({1\over T}+\mu\Big)\mu\trace[Q].
\end{equation}\end{plain}%
\end{theorem}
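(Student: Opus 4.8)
The plan is to study the error $v=U-u$, whose evolution is given by \eqref{v}, via a stochastic energy estimate on $|v|_H^2$. First I would apply the It\^o formula to $|v(t)|_H^2$, noting that the quadratic variation of the driving term $\mu\Pi\,dW$ produces the It\^o correction $\mu^2\trace[Q]$, exactly as in the well-posedness computation. Using the orthogonality relations $\langle B(U,v),v\rangle=\langle B(v,v),v\rangle=0$ from Lemma \ref{Blem} to eliminate two of the three trilinear contributions, and $\langle Av,v\rangle=\|v\|_V^2$, this yields
\begin{plain}$$
d|v|_H^2=\big[-2\nu\|v\|_V^2-2\langle B(v,U),v\rangle-2\mu\langle R_h(v),v\rangle+\mu^2\trace[Q]\big]dt+2\mu\langle v,dW\rangle.
$$\end{plain}
As usual this identity is first derived on the Galerkin system and then passed to the limit; the integrability in \eqref{u-itoH} ensures that the stochastic integral is a genuine martingale, so its expectation vanishes.

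The core of the argument is to dominate the two remaining ``bad'' terms by the viscous dissipation so as to leave a clean negative feedback. For the trilinear term I would invoke the bilinear estimate \eqref{bilinest} and Young's inequality, giving a bound of the form $\nu\|v\|_V^2+\nu^{-1}C_{\rm L}^2\|U\|_V^2|v|_H^2$. For the feedback term I would write $R_h(v)=v-(v-R_h(v))$, so that $-2\mu\langle R_h(v),v\rangle=-2\mu|v|_H^2+2\mu\langle v-R_h(v),v\rangle$, and control the cross term using \eqref{R1} and Young. The decisive point is how the viscous budget $2\nu\|v\|_V^2$ is split: allocating weight $\nu$ to \emph{each} of the two estimates makes the $\|v\|_V^2$-coefficient vanish, while the hypotheses $2c_1\mu h^2\le\nu$ and (equivalently to the second chain inequality) $\mu\ge 4\nu C_{\rm L}^2\lambda_1 G^2$, together with the a-priori bound $\|U\|_V^2\le 2\nu^2\lambda_1 G^2$ from \eqref{boundV} valid for $t\ge t_0$, force each of the two $|v|_H^2$-contributions to be at most $\mu/2$. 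The net $|v|_H^2$-coefficient is then $-2\mu+\mu/2+\mu/2=-\mu$, so for $t\ge t_0$
\begin{plain}$$
d|v|_H^2\le -\mu|v|_H^2\,dt+2\mu\langle v,dW\rangle+\mu^2\trace[Q]\,dt.
$$\end{plain}
Taking expectations and applying Gronwall's lemma gives $\limsup_{t\to\infty}\E|v(t)|_H^2\le\mu\trace[Q]$, the first assertion.

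For the time-averaged $H^1$ bound I would rerun the same computation but redistribute the viscous budget to retain genuine dissipation: assigning weight $\nu/2$ to each Young splitting leaves a surviving term $-\nu\|v\|_V^2$, while the same two hypotheses now make the $|v|_H^2$-coefficient nonpositive. Integrating the resulting inequality over $[t,t+T]$, taking expectations to kill the martingale, and discarding the nonpositive endpoint term $-\E|v(t+T)|_H^2$ gives
\begin{plain}$$
\nu\,\E\int_t^{t+T}\|v(\tau)\|_V^2\,d\tau\le \E|v(t)|_H^2+\mu^2\trace[Q]\,T.
$$\end{plain}
Dividing by $T$ and passing to the $\limsup$ as $t\to\infty$, while using the already-established bound $\limsup_t\E|v(t)|_H^2\le\mu\trace[Q]$ on the first term, yields $(1/T+\mu)\mu\trace[Q]$, which is \eqref{vavg}.

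The main obstacle I anticipate is the delicate balancing of the two Young's inequalities: the sharp constant $\mu\trace[Q]$ (rather than a larger multiple) survives only because the dissipation is shared evenly between the nonlinear term and the feedback cross term, so that both structural hypotheses are used precisely at their thresholds; a careless split would cost a factor and produce a decay rate $\nu\lambda_1$ with limit $\mu^2\trace[Q]/(\nu\lambda_1)$ instead. A secondary technical point is the rigorous justification of the It\^o formula and of the martingale property of the stochastic integral, which must be carried out through the Galerkin approximation rather than applied directly to the limit $v$.
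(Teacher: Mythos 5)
Your proposal is correct and follows essentially the same route as the paper's proof: the It\^o formula applied to $|v|_H^2$ with correction $\mu^2\trace[Q]$, the orthogonality relations to reduce the nonlinearity to $\langle B(v,U),v\rangle$, the same two Young splittings (with weights $\nu$ for the pointwise bound and $\nu/2$ for the time-averaged bound) combined with \eqref{R1}, \eqref{boundV} and the hypotheses on $\mu$ and $h$, Gronwall for the first assertion, and integration over $[t,t+T]$ for \eqref{vavg}. No substantive differences to report.
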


\begin{proof}
In this proof we focus on the time interval $[t_0, \infty)$, where
$t_0$ is given in Theorem \ref{Bounds}. Using the It\^{o} formula on
$|v(t)|_{H}^{2}$ we obtain
$$
		d|v|_{H}^{2}=2\langle v,dv\rangle+
			\mu^2 \trace[Q]\,dt.
$$
Substituting for $dv$ and applying the
orthogonality property \eqref{B} yields
\begin{plain}\begin{equation}\label{u-T1}\eqalign{
	d|v|_{H}^{2}+2\nu\|v\|_{V}^{2} dt
		=&-2\langle v,B(v, U)\rangle dt
		-2\mu\langle v,R_{h}v\rangle dt\cr
		&+2\mu \langle v,dW \rangle +\mu^2\trace[Q]\,dt.
}\end{equation}\end{plain}%
Estimate the first two terms of the right-hand side as follows:
using inequality~\eqref{bilinest} and Young's inequality
\begin{equation}\label{term1a}
    -2\langle v,B(v, U)\rangle
	\le \nu\|v\|_{V}^{2}
    	+\frac{C_{\rm L}^2}{\nu}|v|_{H}^{2}\|U\|_{V}^{2}.
\end{equation}
Using Young's inequality, the interpolation inequality \eqref{R1} and
the assumption that $2c_1\mu h^2\le\nu$ we obtain
\begin{plain}\begin{equation}\label{term2a}
    \begin{split}
      -2\mu\langle v,R_{h}v\rangle
			&=2\mu\langle v,v- R_{h} (v)\rangle-2\mu |v|_{H}^{2}\\
      &\le 2\mu |v-R_{h}(v)|_{L^2}^2-{3\mu\over 2}|v|_{H}^{2}
      \le \nu\|v\|_{V}^{2}-{3\mu\over 2} |v|_{H}^{2}.
    \end{split}
\end{equation}\end{plain}%
Therefore,
\begin{plain}$$
	d|v|_H^2 +
	\Big({3\mu\over 2}-{C_{\rm L}^2\over \nu} \|U\|_V^2\Big) |v|_H^2\,dt
		\le
      2\mu \langle v,dW\rangle +\mu^2\trace[Q]\,dt.
$$\end{plain}%
Since $t \ge t_0$ then  inequality \eqref{boundV} and the hypothesis
$2 C_{\rm L}^2\nu\lambda_1 G^2\le \mu/2$ yield
\begin{plain}$$ {C_{\rm L}^2\over\nu} \|U(t)\|_V^2 \le
	2C_{\rm L}^2\nu \lambda_1 G^2 \le {\mu\over 2},$$\end{plain}%
which implies
\begin{equation}\label{pathwise}
	d|v|_H^2 + \mu |v|_H^2\,dt
		\le
      2\mu \langle v,dW\rangle  +\mu^2\trace[Q]\,dt,
\end{equation}
for all $t \ge t_0$.
Now integrating over $(t_{0}, t)$, then taking the expected
value and
using Gronwall's lemma, we obtain
$$
	\E (|v(t)|_H^2)\le \E (|v(t_0)|_H^2) e^{-\mu (t-t_0)}
		+\mu \trace[Q],
	\quad\hbox{for all}\quad
		t\ge t_0.
$$
Thus, it follows that
$$
	\limsup_{t\to\infty} \E(|v(t)|_H^2)\le \mu\trace[Q].
$$

To obtain \eqref{vavg}, we estimate the terms in \eqref{term1a}
and \eqref{term2a} using Young's inequality in
a slightly different way.  In particular,
\begin{plain}$$
	-2\langle v,B(u,U)\rangle \le
		{\nu\over 2} \|v\|_V^2+
		{2 C_L^2\over \nu} |v|_H^2 \|U\|_V^2
$$\end{plain}%
and
\begin{plain}$$
	-2\mu\langle v,R_h v\rangle \le
		{\nu\over 2} \|v\|_V^2-\mu |v|_H^2.
$$\end{plain}%
Therefore,
$$
	d|v|_H^2+\nu \|v\|_V^2 dt \le 2\mu\langle v,dW\rangle
		+ \mu^2 \trace[Q] dt.
$$
Taking expected values and integrating from $t$ to $t+T$
yields
$$
	\E (|v(t+T)|_H^2)+\nu\int_{t}^{t+T} \E (\|v(\tau)\|_V^2)d\tau
	\le \E (|v(t)|_H^2) + \mu^2T\trace[Q].
$$
Therefore
$$
	\limsup_{t\to\infty}
\nu\int_{t}^{t+T} \E (\|v(\tau)\|_V^2) d\tau
	\le \mu\trace[Q]+\mu^2 T\trace[Q],
$$
from which \eqref{vavg} immediately follows.
\end{proof}

Theorem \ref{main1} applies to observations of the
volume elements given by \eqref{volumes}.
We now state and prove a corollary on finite volume elements
that gives explicit estimates on the
how well $u$ approximates $U$ over time.

\begin{corollary}\label{cor1}
Suppose that the observational measurements
are given by finite volume elements \eqref{volumes}
plus a noise term of the form \eqref{noise}, where
each $b_d$ is an independent one-dimensional Brownian motion
with variance $\sigma^2/2$.
Interpolate these noisy observations using \eqref{genL}
where $\ell_d$ are given by \eqref{stepell}.
Let $\mu=4 C_{\rm L}^2\nu\lambda_1 G^2$ and choose $N=K^2$
large enough such that
$$h=L/K \le \sqrt{\nu/(2c_1\mu)}.$$
Then
the solution $u$ to \eqref{abapprox} satisfies
\begin{equation*}
  \limsup_{t\rightarrow\infty}\E(|u(t)-U(t)|_{H}^{2})
	\le \kappa_1 \nu G^2 \sigma^2
\end{equation*}
and
\begin{plain}$$
\limsup_{t\to\infty}
{\nu\over T}\int_{t}^{t+T} \E (\|u(\tau)-U(\tau)\|_V^2) d\tau
    \le \Big({1\over T}
	+{\kappa_1\nu G^2\over L^2}\Big)\kappa_1 \nu G^2 \sigma^2
$$\end{plain}%
where $\kappa_1=16\pi^2 C_{\rm L}^2$
is an absolute constant.
\end{corollary}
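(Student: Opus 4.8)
The plan is to recognize that Corollary \ref{cor1} is not a new estimate but a direct specialization of Theorem \ref{main1} to the volume-element interpolant, so the entire argument reduces to (i) checking that the prescribed choices of $\mu$ and $h$ meet the two-sided hypothesis of Theorem \ref{main1}, (ii) inserting the explicit trace bound for this particular $W$, and (iii) carrying out the dimensional bookkeeping that converts $\mu\,\trace[Q]$ into the stated multiple of $\nu G^2\sigma^2$. Since the observations are the step-function volume elements of \eqref{volumes}, interpolated by the $\ell_d$ of \eqref{stepell}, the associated Wiener process $W$ from \eqref{genL} is exactly the $[\dot L^2]^2$-valued $Q$-Brownian motion analyzed in Proposition \ref{traceH}; in particular the hypothesis of Theorem \ref{main1} that $W$ be $[\dot L^2]^2$-valued is met, and Proposition \ref{traceH} supplies $\trace[Q]\le\sigma^2 L^2$.

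Next I would verify the chain ${1/h^2}\ge{2c_1\mu/\nu}\ge 8c_1C_{\rm L}^2\lambda_1 G^2$ required by Theorem \ref{main1}. Substituting $\mu=4C_{\rm L}^2\nu\lambda_1 G^2$ gives $2c_1\mu/\nu = 8c_1C_{\rm L}^2\lambda_1 G^2$, so the second inequality holds with equality and is automatically satisfied. The first inequality $1/h^2\ge 2c_1\mu/\nu$ is, after rearrangement, precisely the standing assumption $h=L/K\le\sqrt{\nu/(2c_1\mu)}$ imposed in the statement. Hence all hypotheses of Theorem \ref{main1} are in force, and its conclusions $\limsup_{t\to\infty}\E(|u-U|_H^2)\le\mu\,\trace[Q]$ and \eqref{vavg} are available.

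It then remains only to evaluate the right-hand sides. Using $\trace[Q]\le\sigma^2 L^2$ and $\lambda_1=(2\pi/L)^2=4\pi^2/L^2$, I compute $\mu\,\trace[Q]\le 4C_{\rm L}^2\nu\lambda_1 G^2\cdot\sigma^2 L^2 = 16\pi^2 C_{\rm L}^2\,\nu G^2\sigma^2 = \kappa_1\nu G^2\sigma^2$, which is the first asserted bound. For the second, the same substitution shows $\mu=4C_{\rm L}^2\nu\lambda_1 G^2=\kappa_1\nu G^2/L^2$, so inserting $\mu\,\trace[Q]\le\kappa_1\nu G^2\sigma^2$ and this value of $\mu$ into \eqref{vavg} yields $\bigl(1/T+\kappa_1\nu G^2/L^2\bigr)\kappa_1\nu G^2\sigma^2$, as claimed.

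Because the analytic heavy lifting has already been done in Theorem \ref{main1} and Proposition \ref{traceH}, there is no genuine obstacle here; the only point demanding care is the constant-and-units bookkeeping --- in particular correctly tracking the factor $\lambda_1=4\pi^2/L^2$ so that the $L^2$ arising from $\trace[Q]$ cancels and the clean absolute constant $\kappa_1=16\pi^2C_{\rm L}^2$ emerges, and likewise confirming that the middle inequality of the Theorem \ref{main1} hypothesis is saturated (rather than strict) by the chosen $\mu$.
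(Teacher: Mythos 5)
Your proposal is correct and follows essentially the same route as the paper: invoke Proposition \ref{traceH} for $\trace[Q]\le\sigma^2L^2$, check that $\mu=4C_{\rm L}^2\nu\lambda_1G^2$ and $h\le\sqrt{\nu/(2c_1\mu)}$ saturate and satisfy, respectively, the two inequalities in the hypothesis of Theorem \ref{main1}, and then substitute $\lambda_1=4\pi^2/L^2$ to extract $\kappa_1=16\pi^2C_{\rm L}^2$. The only difference is that you spell out the hypothesis verification that the paper merely asserts, which is if anything slightly more complete.
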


\begin{proof}
By Proposition \ref{traceH} we obtain
$$
	\mu\trace[Q]
		\le \mu \sigma^2 L^2
		\le 4 C_{\rm L}^2\nu\lambda_1 G^2 \sigma^2 L^2
		=\kappa_1\nu G^2\sigma^2.
$$
where $\kappa_1=16\pi^2 C_{\rm L}^2$. Similarly
\begin{plain}$$
	\Big({\mu\over T}+\mu^2\Big)\trace[Q]\le
		\Big({\kappa_1\nu G^2\over T}+{\kappa_1^2\nu^2G^4\over L^2}\Big)
		\sigma^2.
$$\end{plain}%
Since the choices of $\mu$ and $h$ given in the corollary
satisfy the hypothesis
of Theorem \ref{main1}, the corollary is thus proved.
\end{proof}

We remark that the upper bound on the error in the
approximating solution given by Corollary \ref{cor1} is
independent of $h$.
In particular, as we take the observation density
finer and finer there is no improvement in the quality of our
approximation.
This is not surprising, since increasing the resolution of
the observations did not lead to any decrease in the size of the
measurement errors present in the interpolant observables
$\tilde R_h$ given by \eqref{genL}.
We remedy this defect with

\begin{corollary}\label{cor2}
Suppose that the observational measurements
are given by finite volume elements \eqref{volumes}
plus a noise term of the form \eqref{noise} where
each $b_d$ is an independent one-dimensional Brownian motion
with variance $\sigma^2/2$.
Let $\mu$ be as in Corollary \ref{cor1} and $\epsilon\in(0,1)$.
Then, there exists an interpolant observable based on
volume elements with observation density $h$ such that
\begin{plain}\begin{equation}\label{hover}
	{\kappa_2 G^2\over L^2}
\le {\epsilon\over h^2}
	\le {\max(\epsilon,16\kappa_2 G^2)\over L^2}
\end{equation}\end{plain}%
where $\kappa_2=32\pi^2c_1 C_{\rm L}^2$ is an absolute constant
and for which
\begin{equation*}
  \limsup_{t\rightarrow\infty}\E(|u(t)-U(t)|_{H}^{2})
	\le \mu \sigma^2 L^2 \epsilon
\end{equation*}
and
\begin{plain}\begin{equation}
    \limsup_{t\to\infty}
{\nu\over T}\int_{t}^{t+T} \E (\|u(\tau)-U(\tau)\|_V^2) d\tau
    \le \Big({1\over T}+\mu\Big)\mu\sigma^2 L^2\epsilon.
\end{equation}\end{plain}%
\end{corollary}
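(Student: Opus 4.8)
The plan is to deduce both assertions from Theorem~\ref{main1}. That theorem, applied to any volume-element interpolant that satisfies \eqref{R1} together with the smallness condition $1/h^2\ge 2c_1\mu/\nu$, already delivers $\limsup_{t\to\infty}\E(|u-U|_H^2)\le\mu\trace[Q]$ and $\limsup_{t\to\infty}\frac{\nu}{T}\int_t^{t+T}\E(\|u-U\|_V^2)\,d\tau\le(\frac1T+\mu)\mu\trace[Q]$. Comparing with the two inequalities to be proved, it therefore suffices to construct, for the given $\epsilon$, a volume-element interpolant which (i) meets the hypotheses of Theorem~\ref{main1} and (ii) satisfies the refined trace estimate $\trace[Q]\le\sigma^2 L^2\epsilon$ — a full factor $\epsilon$ sharper than the crude bound $\trace[Q]\le\sigma^2 L^2$ from Proposition~\ref{traceH} used in Corollary~\ref{cor1}. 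Producing such an interpolant, and then reading off \eqref{hover}, is the entire content of the proof.

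The construction responds directly to the defect explained in the remark after Corollary~\ref{cor1}: reconstructing a piecewise-constant field at the observation scale forces one independent noise source per reconstructed degree of freedom, so the total noise energy stays $\approx\sigma^2 L^2$ however fine the grid. I would break this by decoupling the observation scale from the reconstruction scale. Concretely, observe volume elements at the fine density $h$ but average the observations over blocks forming coarse cells of side $H$, chosen as close as possible to the Corollary~\ref{cor1} threshold $h_{\max}=\sqrt{\nu/(2c_1\mu)}$; recall $h_{\max}^2=L^2/(\kappa_2 G^2)$ since $2c_1\mu/\nu=8c_1C_{\rm L}^2\lambda_1 G^2=\kappa_2 G^2/L^2$. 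Since an average of cell-averages is again a cell-average, the resulting operator is a bona fide volume-element interpolant whose approximation scale is $H$ (not $h$); hence it satisfies \eqref{R1} with constant $c_1$ at scale $H\le h_{\max}$, so the hypothesis $2c_1\mu H^2\le\nu$ of Theorem~\ref{main1} holds. At the same time each coarse-cell value is the mean of $m^2=(H/h)^2$ observations whose errors are i.i.d. with variance $\sigma^2/2$, so its error variance drops to $\sigma^2/(2m^2)$. Rerunning the computation of Proposition~\ref{traceH} with these reduced variances then yields $\trace[Q]\le\sigma^2 L^2/m^2=\sigma^2 L^2(h/H)^2$.

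It remains to realize a prescribed $\epsilon$ and extract \eqref{hover}. The target $\trace[Q]\le\sigma^2 L^2\epsilon$ amounts to $m^2\ge1/\epsilon$, and combined with $H\le h_{\max}$ this is precisely the lower bound $\kappa_2 G^2/L^2\le\epsilon/h^2$. For existence I would fix the integers explicitly: take $m=\lceil\epsilon^{-1/2}\rceil$ and the least number of coarse cells per side $K_H=\lceil\sqrt{\kappa_2 G^2}\rceil$ with $H=L/K_H\le h_{\max}$, whence $h=H/m$. Each rounding costs at most a factor of two in $h$, hence a factor of four in $1/h^2$; the two roundings together produce the factor $16=4\cdot4$ and so the upper bound $\epsilon/h^2\le 16\kappa_2 G^2/L^2$. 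The $\max(\epsilon,16\kappa_2 G^2)$ simply absorbs the degenerate small-Grashof regime $\kappa_2 G^2<1$, in which $K_H=1$, the coarse reconstruction is the (vanishing) domain average, and $\trace[Q]=0$ trivially. Finally, inserting $\trace[Q]\le\sigma^2 L^2\epsilon$ into the two conclusions of Theorem~\ref{main1} gives exactly the asserted $H$- and $V$-bounds.

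The only place that requires genuine work is the trace computation for the averaged interpolant: one must confirm that averaging preserves \eqref{R1} at scale $H$ with the same constant, and that the covariance trace really scales as $\sigma^2 L^2(h/H)^2$. This means repeating the estimate of Proposition~\ref{traceH} while carrying the reduced per-cell variances $\sigma^2/(2m^2)$ and controlling the cross terms through the near-orthogonality of the coarse characteristic functions. The attendant integer bookkeeping — choosing $m$ and $K_H$ to hit a given $\epsilon$ with an admissible grid and to pin down the constant $16$ — is elementary but must be carried out with care so that both inequalities in \eqref{hover} hold simultaneously.
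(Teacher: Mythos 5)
Your proposal is correct and follows essentially the same route as the paper: oversample with fine volume elements, average the $q^2\approx\epsilon^{-1}$ fine-cell observations inside each coarse cell of side $h'\le\sqrt{\nu/(2c_1\mu)}$ (the average of sub-cell volume averages being exactly the coarse volume average, so \eqref{R1} holds with the same $c_1$), note that the averaged noise is a $Q'$-Brownian motion with $\trace[Q']\le\sigma^2L^2/q^2\le\sigma^2L^2\epsilon$, and conclude from Theorem~\ref{main1}; the integer choices and the factor $16$ from the two roundings match the paper's. The only superfluous point is your concern about cross terms and near-orthogonality in the trace computation: since the averaged Brownian motions over disjoint blocks remain independent, the identity $\trace[Q']=\tfrac{\sigma^2}{2q^2}\sum_d|\gamma_d'|_H^2$ needs no orthogonality of the coarse characteristic functions.
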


\begin{proof}
If $\sqrt{\nu/(2c_1\mu)}\ge L$ then we may take $h=L$
in Theorem \ref{main1}.  In this case $U$ is a steady
state, and consequently no observational data is needed
to accurately recover $U$.  Otherwise,
let $M=K_2^2$ where $K_2\ge 2$ is the unique integer such that
$$h'=L/K_2\le \sqrt{\nu/(2c_1\mu)}<L/(K_2-1).$$
Let $Q'_{m}$ be squares with sides of length $h'$
where $m=1,2,\ldots,M$
defined in a similar way as \eqref{squares}.
Choose $h=h'/q$ where $q$ is the unique integer satisfying
\begin{plain}\begin{equation}\label{choiceq}
q^2\ge \epsilon^{-1}>(q-1)^2.
\end{equation}\end{plain}%
With these choices of $K_2$ and $q$ we have
$$
	\sqrt {2c_1\mu/\nu}\le 1/h'=K_2/L\le 2(K_2-1)/L<2\sqrt{2c_1\mu/\nu}
$$
and
$$
	\epsilon^{-1/2}\le q=(q-1)+1\le \epsilon^{-1/2}+1\le 2\epsilon^{-1/2}.
$$
Therefore
$$
\sqrt{\nu\epsilon/(32 c_1\mu)}\le
	h=h'/q\le
\sqrt{\nu\epsilon/(2 c_1\mu)}
$$
from which \eqref{hover} follows.

Let $Q_n$ be the squares with sides of length $h$
where $n=1,2,\ldots,N$ and $N=L^2/h^2=q^2M$.
The smaller squares $Q_n$ fit inside the
larger squares $Q'_m$ and each larger square is the
union over $q^2$ of the smaller squares.

Define the averaging operator
${\cal A}\colon \mathbb{R}^{2N}\to \mathbb{R}^{2M}$ by
\begin{plain}$$
	\left[\matrix{\smallskip
		{\cal A}(\varphi)_{2m-1}\cr
		{\cal A}(\varphi)_{2m}
	}\right]
=
		{1\over q^2}
		\sum_{Q_n\subseteq Q'_m}
	\left[\matrix{\smallskip
		\varphi_{2n-1}\cr
		\varphi_{2n}
	}\right],
$$\end{plain}%
for $m=1,2,\ldots,M$.
We note that ${\cal O}_{h'}={\cal A}\circ{\cal O}_h$, where
${\cal O}_h$ are the noise-free observations of volume elements
given in \eqref{volumes}, for the $Q_n$ and
${\cal O}_{h'}$ are
the analogous observations for $Q'_m$.

Let $\tilde {\cal O}_h(U(t))$ be the noisy observations defined
by \eqref{noisyO}, where ${\cal E}(t)$ is given by \eqref{noise}.
It follows that $ {\cal A}\circ\tilde {\cal O}_h(U(t))=
	{\cal O}_{h'}(U(t))+{\cal F}(t)$,
where
$$
	{\cal F}(t)dt=(d\beta_1(t),d\beta_2(t),\ldots,d\beta_{2M}(t))
$$
and $\beta_k$ are one-dimensional independent Brownian motions
such that
\begin{plain}$$\E(\beta_k(t))=0
\quad\hbox{and}\quad
\E(\beta_k(t)^2)=t{\sigma^2\over 2q^2},$$\end{plain}%
for $k=1,2,\ldots,2M$.
Therefore, by taking averages of volume elements, also called
spatial oversampling,
we have reduced the variance in the noise term of the
measurements.
In particular, from \eqref{choiceq} the noise term
is now equivalent to a $[\dot L^2]^2$-valued
$Q'$-Brownian motion
with $\trace[Q']\le\sigma^2L^2/q^2\le \sigma^2 L^2 \epsilon$.
We now define the interpolant observable
$${\cal R}_{h'}= \L_{h'}\circ {\cal A}\circ {\cal O}_h.$$
Since ${\cal R}_{h'}$ satisfies \eqref{R1} with the
same constants as before, then
applying Theorem \ref{main1} now completes the proof.
\end{proof}

\subsection{Observations of Nodal Values}\label{subsecm2}

This section first proves a general theorem on interpolant observables
which satisfy \eqref{R2}.  This result is then applied to obtain
explicit estimates when the observational measurements arise
from nodal measurements.

Our proof follows the general strategy of the non-stochastic case
treated in \cite{olson2003} with modifications as was done in the
proof of Theorem \ref{main1} above to account
for the stochastic terms which arise from the stochastic
errors.
In particular, we shall make use of the following inequality
which can be found in \cite{olson2003}.

\begin{lemma}
  \label{minlog}
  Let $\varphi(r)=r-\eta(1+\log r)$ where $\eta>0$.  Then
$$
\min\{\varphi(r):r\ge 1\}\ge-\eta\log \eta.$$
\end{lemma}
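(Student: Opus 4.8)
The plan is to treat this as an elementary one-variable minimization, exploiting the fact that restricting the domain can only raise the infimum. First I would differentiate, obtaining $\varphi'(r) = 1 - \eta/r$, which vanishes precisely at $r = \eta$, together with $\varphi''(r) = \eta/r^2 > 0$. Hence $\varphi$ is strictly convex on all of $(0,\infty)$ and attains its unique global minimum there at $r = \eta$. A direct substitution then gives
$$
\varphi(\eta) = \eta - \eta(1 + \log\eta) = -\eta\log\eta.
$$

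The key remaining step is to pass from the minimum over $(0,\infty)$ to the minimum over the restricted interval $[1,\infty)$. Since $[1,\infty) \subseteq (0,\infty)$, one has
$$
\min\{\varphi(r): r \ge 1\} \ge \inf\{\varphi(r): r > 0\} = \varphi(\eta) = -\eta\log\eta,
$$
which is exactly the claimed bound. This observation removes any need to distinguish the cases $\eta \ge 1$ and $\eta < 1$, and in that sense there is no genuine obstacle: the content of the lemma is simply that the true minimum of the convex function $\varphi$ is $-\eta\log\eta$, and chopping off the interval $(0,1)$ can only increase it.

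If one prefers an argument that stays entirely inside $[1,\infty)$, I would instead split into two cases. When $\eta \ge 1$ the critical point lies in the domain and the minimum equals $-\eta\log\eta$ exactly. When $\eta < 1$ the derivative $\varphi'(r) = 1 - \eta/r > 0$ for all $r \ge 1$, so $\varphi$ is increasing and the minimum is attained at the endpoint, $\varphi(1) = 1 - \eta$; it then remains to verify $1 - \eta \ge -\eta\log\eta$. This last inequality is the only point needing a moment of care, but it follows immediately from monotonicity: setting $g(\eta) = 1 - \eta + \eta\log\eta$, one has $g(1) = 0$ and $g'(\eta) = \log\eta < 0$ on $(0,1)$, so $g \ge 0$ there. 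The global-minimum route of the previous paragraph is cleaner and avoids this case analysis altogether, so that is the approach I would adopt.
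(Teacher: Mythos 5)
Your proof is correct. The paper itself gives no proof of Lemma \ref{minlog}, simply citing \cite{olson2003} for it, so there is nothing to compare against line by line; your argument --- locating the global minimum of the convex function $\varphi$ at $r=\eta$ with value $-\eta\log\eta$ and noting that restricting to $[1,\infty)$ can only raise the minimum --- is the standard elementary one, and your fallback case analysis (including the check $1-\eta\ge -\eta\log\eta$ for $\eta<1$) is also sound.
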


We commence with the proof of

\begin{theorem}\label{main2} Assume that $U$ is a strong solution of
  \eqref{abstract}, that $R_{h}$ satisfies assumption \eqref{R2}
  and that $W$ is a $[\dot H^1]^2$-valued $Q$-Brownian motion.
Assume that $\mu$ is large enough and that $h$ is small enough such that
\begin{plain}
$$
 \mu \ge 2 \nu  \lambda_1 G J, \quad \hbox{and} \quad
	\nu \ge 2 c_3 h^2 \mu
$$\end{plain}%
where $c_3=\max(c_1,\sqrt c_2)$ and
	$J=2C_{\rm B}(2+\log 2C_{\rm B}c^{1/2})(1+\log(1+G))$.
Then
$$
	\limsup_{t\to\infty} \E (\|u-U\|_V^2)\le
4\mu\exp(\nu\lambda_1 G^2 J^2/\mu) \trace[A^{1/2}Q A^{1/2}]$$
and
\begin{plain}$$\eqalign{
	&\limsup_{t\to\infty} {\nu\over T}
		\int_t^{t+T}\E(|Au(\tau)-AU(\tau)|_H^2) d\tau\cr
	&\qquad\le
\bigg\{8\exp(\nu\lambda_1G^2J^2/\mu)
	\Big\{{\mu\over T}+4J^2\Big({1\over T}+\nu\lambda_1\Big)
		\nu\lambda_1 G^2\Big\}
	+\mu^2\bigg\}\cr
			&\qquad\qquad\times 2\trace[A^{1/2}QA^{1/2}].
}$$\end{plain}%
\end{theorem}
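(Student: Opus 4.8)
The plan is to imitate the structure of the proof of Theorem \ref{main1}, but now applying the It\^o formula to $\|v(t)\|_V^2 = |A^{1/2}v(t)|_H^2$ rather than to $|v|_H^2$, since we are after $H^1$-control. Working on the interval $[t_0,\infty)$ where the bounds of Theorem \ref{Bounds} hold, I would write
\begin{plain}$$
  d\|v\|_V^2 + 2\nu|Av|_H^2\,dt
    = -2\langle B(U,v)+B(v,U)-B(v,v),Av\rangle\,dt
      -2\mu\langle R_h(v),Av\rangle\,dt
      +2\mu\langle A^{1/2}v,A^{1/2}\,dW\rangle
      +\mu^2\trace[A^{1/2}QA^{1/2}]\,dt,
$$\end{plain}
the last term being the It\^o correction, which is finite by \eqref{tr-Q-V} precisely because $W$ is now $[\dot H^1]^2$-valued. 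The orthogonality identities \eqref{B1} and \eqref{B2} of Lemma \ref{B2lem} are the key tools for the nonlinear terms: \eqref{B1} kills $\langle B(v,v),Av\rangle$, and \eqref{B2} lets me rewrite the remaining trilinear terms in a form that can be estimated by the Br\'ezis--Gallouet inequality \eqref{brezis}.

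The central estimate is the nonlinear one. After using \eqref{B2}, the dangerous term has the shape $|\langle B(v,v),AU\rangle|$ or $|\langle B(v,U),Av\rangle|$, which I would bound using \eqref{brezis} to control an $L^\infty$ factor, producing a logarithmic factor of the form $1+\log(|Av|_H^2/(\lambda_1\|v\|_V^2))$. This is exactly where Lemma \ref{minlog} enters: after absorbing the $|Av|_H^2$ into the viscous term $2\nu|Av|_H^2$, the leftover coefficient of $\|v\|_V^2$ takes the form $r-\eta(1+\log r)$ with $r=|Av|_H^2/(\lambda_1\|v\|_V^2)\ge 1$, and Lemma \ref{minlog} gives the uniform lower bound $-\eta\log\eta$. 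Tracking constants through this, using the a priori bounds $\|U\|_V^2\le 2\nu^2\lambda_1 G^2$ and $|AU|_H^2\le c\nu^2\lambda_1^2(1+G)^4$ from \eqref{boundV}--\eqref{boundA}, should reproduce the quantity $J=2C_{\rm B}(2+\log 2C_{\rm B}c^{1/2})(1+\log(1+G))$ and yield a differential inequality of the form
\begin{plain}$$
  d\|v\|_V^2 + \Big(\tfrac{\mu}{2}-\nu\lambda_1 G^2 J^2 \cdot\tfrac{1}{\mu}\cdot(\cdots)\Big)\|v\|_V^2\,dt
    \le 2\mu\langle A^{1/2}v,A^{1/2}\,dW\rangle + \mu^2\trace[A^{1/2}QA^{1/2}]\,dt,
$$\end{plain}
where the hypothesis $\mu\ge 2\nu\lambda_1 G J$ guarantees the decay coefficient is positive; the linear feedback term $-2\mu\langle R_h(v),Av\rangle$ is handled as in \eqref{term2a} using \eqref{R2} and $\nu\ge 2c_3 h^2\mu$, absorbing it into the viscosity. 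The factor $\exp(\nu\lambda_1 G^2 J^2/\mu)$ in the final bound signals that the effective decay rate is $\mu$ divided by this exponential, which is the price paid for the Br\'ezis--Gallouet logarithm.

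From here the two conclusions follow exactly as in Theorem \ref{main1}: integrating over $(t_0,t)$, taking expectations to kill the martingale term, and applying Gronwall's lemma gives the first $\limsup$ bound; re-estimating with a factor of $\nu/2$ split off the viscous term and integrating over $[t,t+T]$ gives the time-averaged bound on $\int|Av|_H^2$. The main obstacle I expect is the bookkeeping in the nonlinear estimate: getting the Br\'ezis--Gallouet and Young's-inequality constants to collapse cleanly into the stated $J$, and confirming that the hypotheses $\mu\ge 2\nu\lambda_1 GJ$ and $\nu\ge 2c_3 h^2\mu$ are exactly what is needed to make both the decay coefficient positive and the feedback term absorbable. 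Everything else is a faithful, if more technical, stochastic adaptation of the deterministic argument of \cite{olson2003}.
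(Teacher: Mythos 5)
Your proposal follows essentially the same route as the paper's proof: It\^o's formula applied to $\|v\|_V^2$, the orthogonality identities \eqref{B1}--\eqref{B2} to reduce the nonlinearity to $\langle AU,B(v,v)\rangle$, the Br\'ezis--Gallouet inequality combined with Lemma \ref{minlog}, absorption of the feedback term via \eqref{R2} and $\nu\ge 2c_3h^2\mu$, and a non-autonomous Gronwall argument in which the time-averaged bound on $|AU|_H^2$ from \eqref{boundV} produces the factor $\exp(\nu\lambda_1G^2J^2/\mu)$. The sketch is correct in all essentials and matches the paper's argument.
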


\begin{proof}
We focus on the interval $[t_0, \infty)$, where $t_0$ is given in Theorem \ref{Bounds}. Using the It\^{o} formula on $\|v(t)\|_{V}$ we obtain
$$
	d\|v\|_V^2=2\vl v,dv\vr + \mu^2 \trace[A^{1/2} Q A^{1/2}] dt.
$$
For notational convenience we shall write $\Sigma=\trace[A^{1/2} Q A^{1/2}]$
throughout the rest of this proof.
Substituting for $dv$ and applying \eqref{B1} and \eqref{B2} yields
\begin{plain}$$\eqalign{
	d\|v\|_V^2+2\nu |Av|_H^2 dt=
		&2\langle AU,B(v,v)\rangle dt -2\mu \langle Av,R_h(v)\rangle dt\cr
		&+2\mu\langle Av,dW\rangle
		 + \mu^2 \Sigma dt.
}$$\end{plain}%

The Br\'ezis--Gallouet inequality \eqref{brezis}
implies
\begin{plain}\begin{equation*}
  \begin{split}
    |\langle AU,B(v, v)\rangle|&\leq \|v\|_{\infty}\|v\|_{V} |AU|_{H}\\
    &\leq C_{\rm B} \|v\|_V^{2}\bigg\{ 1+\log {|Av|_{H}^2\over \lambda_1
      \|v\|_{V}^2}\bigg\}|AU|_{H}
  \end{split}
\end{equation*}\end{plain}%
and the assumption
$2\mu\max(c_1,\sqrt{c_2}) h^2\le\nu$ along with \eqref{R2}
and Young's inequality implies
\begin{plain}$$\eqalign{
-2\mu \langle Av,R_h(v)\rangle
	&=2\mu \langle Av,v-R_h(v)\rangle - 2\mu \|v\|_V^2\cr
	&\le {2\mu^2\over\nu} |v-R_h(v)|_{L^2}^2
	+{\nu\over 2} |A v|_H^2-2\mu\|v\|_V^2\cr
	&\le \nu |Av|_H^2-\mu\|v\|_V^2.
}$$\end{plain}%
Therefore
\begin{plain}\begin{equation*}
  \begin{split}
    d\|v\|_{V}^{2}+ \bigg(\nu\lambda_{1} {|Av|_{H}^2\over \lambda_1
        \|v\|_{V}^2} &-2C_{\rm B} |AU|_{H}
	\bigg\{
      1+\log {|Av|_{H}^2\over \lambda_1 \|v\|_{V}^2}\bigg\}
		+\mu \bigg)\|v\|_{V}^{2}dt\\
    &\leq 2\mu\langle Av,d W\rangle+\mu^2 \Sigma dt.
  \end{split}
\end{equation*}\end{plain}%

Now setting
\begin{plain}$$
\eta=\frac{2C_{\rm B}|AU|_{H}}{\nu\lambda_{1}}
	\quad\hbox{and}\quad
r={|Av|_{H}^2\over \lambda_1 \|v\|_{V}^2}
$$\end{plain}%
in Lemma \ref{minlog}, and noting that $r\ge 1$, we obtain that
\begin{plain}\begin{equation*}
  \begin{split}
    d\|v\|_{V}^{2}+\bigg( \mu&-2C_{\rm B}|AU|_H\log
      {2C_{\rm B}|AU|_{H}\over \nu\lambda_1}\bigg)\|v\|_{V}^{2}dt \\
    &\leq 2\mu\langle  Av,d W\rangle
    +\mu^2\Sigma dt.
  \end{split}
\end{equation*}\end{plain}%
Since $t \ge t_0$, then by \eqref{boundA} we estimate
\begin{plain}\begin{equation*}
  2C_{\rm B}\log {2C_{\rm B}|AU|_{H}\over \nu\lambda_1}
	\le 2C_{\rm B}\log {2C_{\rm B}c^{1/2}(1+G)^2}
	\le J.
\end{equation*}\end{plain}%
Consequently,
\begin{equation*}
  d\|v\|_{V}^{2}
	+\Big\{ \mu-J|AU|_{H}\Big\}\|v\|_{V}^{2}dt
	\leq 2\mu\langle  Av,d W\rangle
  +\mu^2\Sigma dt.
\end{equation*}
Applying Young's inequality  we get
\begin{plain}$$
	d \|v\|_V^2
	+{1\over 2}\Big\{ \mu-{J^2\over\mu}|AU|_{H}^2\Big\} \|v\|_{V}^{2}dt
	\le 2\mu\langle  Av,d W\rangle +\mu^2\Sigma dt.
$$\end{plain}

Take
$t_0$ as in Theorem \ref{Bounds} and define
\begin{plain}$$
	\alpha(t)={1\over 2}\Big\{ \mu-{J^2\over\mu}|AU(t)|_{H}^2\Big\}
\quad\hbox{and}\quad
	\Psi(t)=\int_{t_0}^t \alpha(s) ds.
$$\end{plain}%
Now, integrating and taking expected value yields
$$
	\E (\|v(t)\|_V^2)\le \E (\|v(t_0)\|_V^2) e^{-\Psi(t)}
		+\mu^2\Sigma \int_{t_0}^t e^{-\Psi(t)+\Psi(\tau)} d\tau.
$$
Since $\mu> 2\nu\lambda_1GJ$ then by the estimate \eqref{boundV} we
obtain
\begin{plain}$$\eqalign{
	-\Psi(t)+\Psi(\tau)&\le -{\mu\over 2} (t-\tau) +
		{J^2\over\mu}(1+(t-\tau)\nu\lambda_1)\nu\lambda_1 G^2\cr
	&\le -{\mu\over 4} (t-\tau) + {\nu\lambda_1\over\mu}G^2 J^2.
}$$\end{plain}%
Therefore $-\Psi(t)\to -\infty$ and
\begin{plain}$$
	\int_{t_0}^t e^{-\Psi(t)+\Psi(\tau)}d\tau
		\le  {c_4\over \mu} \big(1-e^{-\mu(t-t_0)/4}\big)
	\to {c_4\over \mu},
\quad\hbox{as}\quad
	t\to\infty,
$$\end{plain}%
where $c_4=4\exp(\nu\lambda_1 G^2 J^2/\mu)$.  It follows that
$$
	\limsup_{t\to\infty} \E (\|v(t)\|_V^2)
		\le c_4\mu\Sigma
		= 4\mu\exp(\nu\lambda_1 G^2 J^2/\mu) \trace[A^{1/2}Q A^{1/2}],
$$
which is the first inequality.

To obtain the second inequality, we use
\begin{plain}$$
	\eta={4C_{\rm B} |AU|_H\over \nu\lambda_1}
\quad\hbox{and}\quad
	r={|Av|_H^2\over\lambda_1 \|v\|_V^2}
$$\end{plain}%
in Lemma \ref{minlog} to obtain
\begin{plain}\begin{equation}\label{TAofA}\eqalign{
	d\|v\|_V^2+{\nu\over 2}|Av|_H^2 dt
	+{1\over 2}
		\Big\{\mu&-{\tilde J^2\over\mu}|AU|_H^2\Big\}\|v\|_V^2 dt\cr
	&\le  2\mu\langle Av,d W\rangle dt + \mu^2\Sigma dt}
\end{equation}\end{plain}%
where
$$\tilde J=4C_{\rm B}\log 4C_{\rm B}c^{1/2}(1+G)^2\le 2 J.$$
Let $t_1\ge t_0$ be large
enough such that
$$\E(\|v(t)\|_V^2) \le 2c_4\mu\Sigma\qquad\hbox{for}\qquad t\ge t_1.$$
After integrating inequality \eqref{TAofA} and
taking expected values, we obtain for times $t>t_1$ that
\begin{plain}$$\eqalign{
	{\nu\over 2} \int_t^{t+T} \E |Av(\tau)|_H^2d\tau
		&\le 2c_4\mu \Sigma
		+c_4 \tilde J^2\Sigma \int_t^{t+T} |AU(\tau)|_H^2 d\tau
			+\mu^2 T\Sigma\cr
		&\le 2c_4\mu \Sigma
		+8c_4 J^2\Sigma (1+T\nu\lambda_1)\nu\lambda_1 G^2+\mu^2T\Sigma.
}$$\end{plain}%
This finishes the proof.
\end{proof}

The bounds on $h^{-2}$ are proportional to $G(1+\log(1+G))$ which is
similar to the deterministic case.  However, the bounds on the
expected value of $\|u-U\|_V^2$ depend exponentially on $G$.
Therefore, unless the variance in the stochastic error
represented by $\trace[A^{1/2}Q A^{1/2}]$ is very small, this
bound will be very large.  However, this exponential dependence
on $G$ may be removed by taking
$\mu=\nu\lambda_1G^2J^2$ and
$h^{-2}$ correspondingly larger.  This yields

\begin{corollary}\label{cor1main2}
Suppose $\mu=c_5\nu\lambda_1 G^2(1+\log(1+G))^2$
and $h$ is small enough such that
\begin{plain}$$
    \nu  \ge 2 c_3\mu h^2,
$$\end{plain}%
where $c_3$ is defined as in Theorem \ref{main2}
and $c_5=4 C_{\rm B}^2(2+\log2C_{\rm B}c^{1/2})^2$.
Then
$$
    \limsup_{t\to\infty} \E (\|u-U\|_V^2)\le
        4e \mu \trace[A^{1/2}Q A^{1/2}]$$
and
\begin{plain}$$\eqalign{
	&\limsup_{t\to\infty} {\nu\over T}
		\int_t^{t+T}\E(|Au(\tau)-AU(\tau)|_H^2) d\tau\cr
	&\qquad\qquad\le
\Big\{ {20\over T} + 16\nu\lambda_1+{\mu\over 2e}\Big\}
		4e\mu \trace[A^{1/2}QA^{1/2}].
}$$\end{plain}%
\end{corollary}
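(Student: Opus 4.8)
The plan is to obtain this corollary directly from Theorem \ref{main2} by inserting the prescribed value of $\mu$ and simplifying; no new estimate on the dynamics of $v=U-u$ is needed. First I would record the algebraic identity that motivates the choice of $\mu$. Since $J=2C_{\rm B}(2+\log 2C_{\rm B}c^{1/2})(1+\log(1+G))$, squaring gives $J^2=c_5(1+\log(1+G))^2$ with $c_5=4C_{\rm B}^2(2+\log2C_{\rm B}c^{1/2})^2$, so the prescribed nudging parameter satisfies $\mu=c_5\nu\lambda_1G^2(1+\log(1+G))^2=\nu\lambda_1G^2J^2$. Consequently $\nu\lambda_1 G^2 J^2/\mu=1$, and the exponential factor appearing throughout Theorem \ref{main2} collapses to $\exp(\nu\lambda_1 G^2 J^2/\mu)=e$. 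This single observation is exactly what removes the exponential dependence on $G$ flagged in the remark preceding the corollary.

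Next I would check that the hypotheses of Theorem \ref{main2} hold. The resolution condition $\nu\ge 2c_3h^2\mu$ is assumed outright. The nudging condition $\mu\ge 2\nu\lambda_1 GJ$ reduces, upon substituting $\mu=\nu\lambda_1G^2J^2$, to $GJ\ge 2$; since $1+\log(1+G)\ge 1$, the quantity $J$ is bounded below by the absolute constant $2C_{\rm B}(2+\log 2C_{\rm B}c^{1/2})$, so the condition holds in the relevant range of $G$ (for very small $G$ the reference solution is effectively a steady state and needs no observations, as already noted in the proof of Corollary \ref{cor2}). With the hypotheses in force, Theorem \ref{main2} applies with the stated constants.

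For the first bound I would substitute $\exp(\nu\lambda_1G^2J^2/\mu)=e$ into $\limsup_{t\to\infty}\E(\|u-U\|_V^2)\le 4\mu\exp(\nu\lambda_1 G^2J^2/\mu)\trace[A^{1/2}QA^{1/2}]$, which immediately yields $4e\mu\trace[A^{1/2}QA^{1/2}]$. For the time-averaged bound I would substitute both $\exp(\cdots)=e$ and the identity $4J^2\nu\lambda_1G^2=4\mu$ into the second conclusion of Theorem \ref{main2}. The inner brace $\{\mu/T+4J^2(1/T+\nu\lambda_1)\nu\lambda_1G^2\}$ becomes $\{5\mu/T+4\mu\nu\lambda_1\}$, so writing $\Sigma=\trace[A^{1/2}QA^{1/2}]$ the full prefactor reads $\{8e(5\mu/T+4\mu\nu\lambda_1)+\mu^2\}\cdot 2\Sigma=\{40e\mu/T+32e\mu\nu\lambda_1+\mu^2\}\cdot 2\Sigma$. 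Factoring out $4e\mu$ gives exactly $\{20/T+16\nu\lambda_1+\mu/(2e)\}\,4e\mu\,\Sigma$, which is the claimed inequality.

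I expect the only genuine work to be careful bookkeeping in this last step, tracking the factor $8e$, the doubling produced by $4J^2\nu\lambda_1G^2=4\mu$, the additive $\mu^2$ term, and the overall factor $2$, together with the brief verification that $GJ\ge 2$. There is no analytic obstacle, since all the dynamical estimates were already carried out in Theorem \ref{main2}; the corollary is essentially an optimization of $\mu$ that trades the exponential factor in $G$ against a correspondingly larger required observation density $h^{-2}$.
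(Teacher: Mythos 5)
Your proposal is correct and follows exactly the route the paper intends: the paper gives no separate proof of Corollary \ref{cor1main2} beyond the one-sentence remark that choosing $\mu=\nu\lambda_1G^2J^2$ (equivalently $\mu=c_5\nu\lambda_1G^2(1+\log(1+G))^2$, since $J^2=c_5(1+\log(1+G))^2$) collapses the exponential factor in Theorem \ref{main2} to $e$, and your bookkeeping for the time-averaged bound, $\{8e(5\mu/T+4\mu\nu\lambda_1)+\mu^2\}\cdot 2\Sigma=\{20/T+16\nu\lambda_1+\mu/(2e)\}\,4e\mu\Sigma$, checks out. Your added verification that the hypothesis $\mu\ge 2\nu\lambda_1GJ$ reduces to $GJ\ge 2$ is a point the paper silently omits, and your handling of it (large-$G$ regime, with small $G$ dismissed as in Corollary \ref{cor2}) is reasonable.
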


We apply Corollary \ref{cor1main2} to nodal observations
described by \eqref{nodes}
to obtain

\begin{corollary}\label{nodcor1}
Suppose that the observational measurements are given by
nodal observations \eqref{nodes} plus a noise term of the form
\eqref{noise}, where each $b_d$ is an independent one-dimensional
Brownian motion with variance $\sigma^2/2$.
Interpolate the noisy observations
using \eqref{genL} where $\ell_d$ are given by
\eqref{smoothell}.
Suppose that $\sqrt{\nu/(2c_3\mu)}<L$
where $\mu=c_5\nu\lambda_1G^2(1+\log(1+G))^2$ and
choose $N=K^2$
such that
$$h=L/K\le \sqrt{\nu/(2c_3\mu)}<L/(K-1).$$
Then the solution $U$ to \eqref{abapprox} satisfies
$$
	\limsup_{t\to\infty} \E(\|u(t)-U(t)\|_V^2)
		\le
		\kappa_3 \nu \lambda_1 G^4(1+\log(1+G))^4 \sigma^2
$$
and
\begin{plain}$$\eqalign{
    \limsup_{t\to\infty} {\nu\over T}
		\int_t^{t+T} \E (|Au(\tau)&-AU(\tau)|_H^2) d\tau\cr
	\le
\Big\{ {20\over T} + 16\nu\lambda_1&+{c_5\nu\lambda_1 \over 2e}
		G^2(1+\log(1+G))^2\Big\}\cr
		&\times
		\kappa_3 \nu \lambda_1 G^4(1+\log(1+G))^4 \sigma^2
}$$\end{plain}%
where $\kappa_3=128\pi^2ecc_3c_5^2$
is an absolute constant.
\end{corollary}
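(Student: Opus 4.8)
The plan is to obtain Corollary \ref{nodcor1} as a direct specialization of Corollary \ref{cor1main2} to the smoothed nodal interpolant, the only genuine work being to convert the abstract bound $4e\mu\,\trace[A^{1/2}QA^{1/2}]$ into the explicit dependence on the Grashof number $G$ claimed in the statement. Since the choice $\mu=c_5\nu\lambda_1G^2(1+\log(1+G))^2$ is exactly that of Corollary \ref{cor1main2}, and $h$ is chosen so that $h\le\sqrt{\nu/(2c_3\mu)}$, that is $\nu\ge 2c_3\mu h^2$, the hypotheses of Corollary \ref{cor1main2} will be met provided the interpolant is admissible. This last point I would verify by recalling that the functions $\ell_d$ of \eqref{smoothell}, when built from the nodal observations \eqref{nodes}, produce an interpolant satisfying \eqref{R2}, and that by Proposition \ref{traces} the associated $W$ is a $[\dot H^1]^2$-valued $Q$-Brownian motion --- precisely the standing assumptions of Theorem \ref{main2} and hence of Corollary \ref{cor1main2}.

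The heart of the argument is the trace estimate. First I would invoke the second bound of Proposition \ref{traces}, namely $\trace[A^{1/2}QA^{1/2}]\le c\sigma^2 L^2/h^2$, and then translate the geometric resolution $h$ into the parameters $\mu$ and $\nu$. Writing $h=L/K$, so that $L^2/h^2=K^2$, the two-sided condition $h=L/K\le\sqrt{\nu/(2c_3\mu)}<L/(K-1)$ gives $L\sqrt{2c_3\mu/\nu}\le K< L\sqrt{2c_3\mu/\nu}+1$. The standing assumption $\sqrt{\nu/(2c_3\mu)}<L$ says exactly that $L\sqrt{2c_3\mu/\nu}>1$, so the ``$+1$'' may be absorbed multiplicatively: $K<2L\sqrt{2c_3\mu/\nu}$, whence $K^2<8c_3\mu L^2/\nu$. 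Combining, $\trace[A^{1/2}QA^{1/2}]<8cc_3\,\sigma^2\mu L^2/\nu$. Substituting $\mu=c_5\nu\lambda_1G^2(1+\log(1+G))^2$ cancels the factor $\nu$, and using the periodic eigenvalue identity $\lambda_1 L^2=(2\pi)^2=4\pi^2$ to eliminate the length scale produces $\trace[A^{1/2}QA^{1/2}]<32\pi^2 cc_3c_5\,\sigma^2 G^2(1+\log(1+G))^2$.

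With this in hand the first conclusion is immediate: Corollary \ref{cor1main2} gives $\limsup_{t\to\infty}\E(\|u-U\|_V^2)\le 4e\mu\,\trace[A^{1/2}QA^{1/2}]$, and inserting both $\mu=c_5\nu\lambda_1G^2(1+\log(1+G))^2$ and the trace bound multiplies the two factors $G^2(1+\log(1+G))^2$ together, yielding $\kappa_3\,\nu\lambda_1 G^4(1+\log(1+G))^4\sigma^2$ with $\kappa_3=128\pi^2 e cc_3c_5^2$; here I would simply check that $4e\cdot c_5\cdot 32\pi^2 cc_3c_5=128\pi^2 e cc_3c_5^2$ reproduces the stated constant. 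The second, time-averaged conclusion follows identically: in the prefactor $\{20/T+16\nu\lambda_1+\mu/(2e)\}\cdot 4e\mu\,\trace[A^{1/2}QA^{1/2}]$ of Corollary \ref{cor1main2}, the factor $4e\mu\,\trace[A^{1/2}QA^{1/2}]$ is bounded exactly as above, while $\mu/(2e)=c_5\nu\lambda_1G^2(1+\log(1+G))^2/(2e)$ recovers the displayed bracket verbatim.

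I expect no structural obstacle here, as the content is essentially bookkeeping built on top of Corollary \ref{cor1main2}. The single step requiring care is the passage to $K^2<8c_3\mu L^2/\nu$, where the integer rounding inherent in selecting $K$ must be controlled: the assumption $\sqrt{\nu/(2c_3\mu)}<L$ is precisely what lets the additive discrepancy coming from $K-1$ be converted into a harmless factor of $2$. Everything downstream is constant tracking, with the eigenvalue identity $\lambda_1 L^2=4\pi^2$ the only structural input used, its role being to clear the length scale $L$ so that the final bound is expressed purely in terms of $\nu$, $\lambda_1$, $G$ and $\sigma^2$.
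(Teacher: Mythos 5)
Your proposal is correct and follows essentially the same route as the paper: both reduce the claim to Corollary \ref{cor1main2}, bound $\trace[A^{1/2}QA^{1/2}]\le c\sigma^2K^2$ via Proposition \ref{traces}, absorb the integer rounding in $K$ into a factor of $4$ using $\sqrt{\nu/(2c_3\mu)}<L$, and clear $L$ with $\lambda_1L^2=4\pi^2$ to arrive at $\kappa_3=128\pi^2ecc_3c_5^2$. No gaps.
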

\begin{proof}
By Proposition \ref{traces} equation \eqref{tr-Q-V}, we obtain
\begin{plain}$$\eqalign{
	\trace[A^{1/2}Q A^{1/2}]
	&\le c\sigma^2 {L^2\over h^2}
	\le c\sigma^2 K^2
	\le 4c\sigma^2 (K-1)^2\cr
	&< 32\pi^2 cc_3c_5 G^2(1+\log(1+G))^2\sigma^2.
}$$\end{plain}%
Since $\mu$ and $h$ satisfy the hypothesis of Corollary \ref{cor1main2},
then
$$
    \limsup_{t\to\infty} \E (\|u-U\|_V^2)\le
128\pi^2ecc_3c_5^2\nu\lambda_1 G^4(1+\log(1+G))^4 \sigma^2$$
and the second inequality follows similarly.
\end{proof}

We end by noting that the oversampling argument used to reduce
the error in Corollary \ref{cor2} can also be used
with nodal measurements.  Along these lines we obtain

\begin{corollary}
Suppose that the observational measurements
are given by nodal observations \eqref{nodes}
plus a noise term of the form \eqref{noise} where
each $b_d$ is an independent one-dimensional Brownian motion
with variance $\sigma^2/2$.
Let $\mu$ be as in Corollary \ref{cor1main2} and $\epsilon\in(0,1)$.
Then, there exists an interpolant observable based on
nodal measurements with observation density $h$ such that
\begin{equation*}
	{\kappa_4 G^2(1+\log(1+G))^2\over L^2} \le {\epsilon\over h^2}
	\le {\max(\epsilon,16\kappa_4 G^2(1+\log(1+G))^2)\over L^2}
\end{equation*}
where $\kappa_4 =32\pi^2 c_3 C_{\rm B}^2(2+\log 2 C_{\rm B}c^{1/2})^2$
is an absolute constant and for which
\begin{equation*}
  \limsup_{t\rightarrow\infty}\E(\|u(t)-U(t)\|_{V}^{2})
    \le
32 e c c_3 \mu^2\nu^{-1} \sigma^2 L^2\epsilon
\end{equation*}
and
\begin{plain}$$\eqalign{
	&\limsup_{t\to\infty} {\nu\over T}
		\int_t^{t+T}\E|Au(\tau)-AU(\tau)|_H^2d\tau\cr
	&\qquad\qquad\le
\Big\{ {20\over T}+16\nu\lambda_1+{\mu\over 2e}\Big\}
32 e c c_3 \mu^2\nu^{-1} \sigma^2 L^2\epsilon.
}$$\end{plain}%
\end{corollary}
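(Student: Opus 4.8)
The plan is to transplant the spatial-oversampling argument of Corollary~\ref{cor2} into the \eqref{R2} setting, so that Corollary~\ref{cor1main2} plays the role that Theorem~\ref{main1} played there. First I would fix the coarse (effective) resolution $h'$ by the same integer-rounding as in the proof of Corollary~\ref{cor2}: take the unique integer $K_2\ge 2$ with $h'=L/K_2\le\sqrt{\nu/(2c_3\mu)}<L/(K_2-1)$, and then set $h=h'/q$ where $q$ is the unique integer with $q^2\ge\epsilon^{-1}>(q-1)^2$. The identical integer estimates then give $1/h'^2<8c_3\mu/\nu$ and $q^{-2}\le\epsilon$, and since here $\mu=c_5\nu\lambda_1 G^2(1+\log(1+G))^2$ with $\lambda_1=4\pi^2/L^2$, combining these reproduces the asserted two-sided bound on $\epsilon/h^2$ with the constant $\kappa_4=8\pi^2 c_3c_5$.

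Next I would introduce the averaging operator $\mathcal{A}\colon\mathbb{R}^{2N}\to\mathbb{R}^{2M}$ exactly as in Corollary~\ref{cor2}, replacing the $q^2$ fine nodal values lying in each coarse square by their arithmetic mean. Because the fine measurement errors in \eqref{noisyO} are independent with variance $\sigma^2/2$, the averaged error on each coarse component is again Gaussian with variance reduced to $\sigma^2/(2q^2)$; hence $\mathcal{A}\circ\tilde{\mathcal{O}}_h$ equals the averaged noise-free data perturbed by a $[\dot H^1]^2$-valued Wiener process built from the coarse smooth functions $\tilde\psi_m'$ with effective variance $\sigma'^2=\sigma^2/q^2$. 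Setting $\mathcal{R}_{h'}=\mathcal{L}_{h'}\circ\mathcal{A}\circ\mathcal{O}_h$ and applying Proposition~\ref{traces} at resolution $h'$ with this reduced variance gives $\trace[A^{1/2}QA^{1/2}]\le c\sigma'^2 L^2/h'^2 = c\sigma^2 L^2/(q^2 h'^2)\le 8cc_3\mu\nu^{-1}\sigma^2 L^2\epsilon$. Feeding this into Corollary~\ref{cor1main2} yields both asserted bounds, the $V$-norm estimate being $4e\mu\trace[A^{1/2}QA^{1/2}]\le 32ecc_3\mu^2\nu^{-1}\sigma^2 L^2\epsilon$, and the time-averaged $D(A)$ estimate following from the second bound of Corollary~\ref{cor1main2} in the same way.

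The step I expect to be the genuine obstacle, and the point where the nodal case departs from Corollary~\ref{cor2}, is verifying that the averaged interpolant $\mathcal{R}_{h'}$ still satisfies the approximation property \eqref{R2} at the coarse scale $h'$. For volume elements the averaging is exact, $\mathcal{O}_{h'}=\mathcal{A}\circ\mathcal{O}_h$, so the coarse interpolant is literally the one of \cite{jones1992} and nothing new is required; here, by contrast, $\tfrac{1}{q^2}\sum_{x_n\in Q_m'}\Phi(x_n)$ is only a midpoint-rule approximation to the coarse volume element $\bar\Phi_m=\tfrac{1}{h'^2}\int_{Q_m'}\Phi$. I would control the defect by writing $\mathcal{R}_{h'}=R_{h'}^{\mathrm{vol}}+E$, where $R_{h'}^{\mathrm{vol}}$ is the smoothed volume-element interpolant of \cite{olson2013} supplying the $c_1 h'^2\|\Phi\|_{H^1}^2$ term through \eqref{R1}, and $E=\sum_m(a_m-\bar\Phi_m)(\tilde\psi_m'-\langle\tilde\psi_m'\rangle)$ collects the cellwise quadrature errors. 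Since the midpoint rule is exact on affine functions, a Bramble--Hilbert scaling argument gives $|a_m-\bar\Phi_m|\le c\,h^2 h'^{-1}|\Phi|_{H^2(Q_m')}$, and estimating $\|E\|_{L^2}^2$ via the near-orthogonality of the $\tilde\psi_m'$ from Proposition~\ref{prop2} together with $\|\tilde\psi_m'\|_{L^2}\sim h'$ leaves a contribution bounded by $c_2 h^4\|\Phi\|_{H^2}^2\le c_2 h'^4\|\Phi\|_{H^2}^2$. Adding the two pieces produces exactly \eqref{R2} at resolution $h'$, with constants inherited from the two underlying interpolants, after which the argument closes as in Corollary~\ref{cor2}.
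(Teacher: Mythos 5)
Your overall architecture — coarse resolution $h'$ chosen by the same integer rounding as in Corollary~\ref{cor2}, oversampling factor $q$ with $q^{-2}\le\epsilon$, averaging the $q^2$ independent noisy readings per coarse square to cut the variance to $\sigma^2/(2q^2)$, the trace bound $\trace[A^{1/2}Q'A^{1/2}]\le 8cc_3\mu\nu^{-1}\sigma^2L^2\epsilon$, and the final appeal to Corollary~\ref{cor1main2} — coincides with the paper's proof. The one step you flag as ``the genuine obstacle,'' verifying that the averaged interpolant satisfies \eqref{R2} at scale $h'$, is however where your argument breaks, and it is also the step the paper disposes of in one line. The paper does not compare $\frac{1}{q^2}\sum_{x_n\in Q'_m}\Phi(x_n)$ to the volume average $\bar\Phi_m$ at all. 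Instead it observes that, since each coarse square $Q'_m$ contains exactly $q^2$ of the fine nodes, the fine data can be regrouped as $q^2$ separate \emph{coarse nodal observation families} ${\cal O}^j_{h'}$, $j=1,\dots,q^2$, each one picking a single point $x'_{m,j}\in Q'_m$ per coarse square. Each $\L_{h'}\circ{\cal O}^j_{h'}$ is therefore a legitimate nodal interpolant at resolution $h'$ in the sense of \eqref{nodes} and satisfies \eqref{R2} with the original constants $c_1,c_2$; the averaged operator ${\cal R}_{h'}=\frac{1}{q^2}\sum_j\L_{h'}\circ{\cal O}^j_{h'}$ (which equals your $\L_{h'}\circ{\cal A}\circ{\cal O}_h$) then satisfies \eqref{R2} with the \emph{same} constants by convexity: $\|\varphi-\frac{1}{q^2}\sum_jR^j_{h'}\varphi\|_{L^2}\le\frac{1}{q^2}\sum_j\|\varphi-R^j_{h'}\varphi\|_{L^2}$, and each summand is bounded by $\bigl(c_1h'^2\|\varphi\|^2_{H^1}+c_2h'^4\|\varphi\|^2_{H^2}\bigr)^{1/2}$.

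Your quadrature-error route has two concrete defects. First, the nodes $x_n$ are \emph{arbitrary} points of the small squares $Q_n$, so the rule $\frac{1}{q^2}\sum_{x_n\in Q'_m}\Phi(x_n)$ is exact only on constants, not on affine functions (for an affine $\Phi$ the rule returns $\Phi$ evaluated at the barycenter of the $x_n$, which need not be the centroid of $Q'_m$). The Bramble--Hilbert bound you invoke therefore degrades to one involving the $H^1$ seminorm on $Q'_m$ as well, and the claimed cellwise error $c\,h^2h'^{-1}|\Phi|_{H^2(Q'_m)}$ is not available. Second, even granting a corrected quadrature estimate, the splitting ${\cal R}_{h'}=R^{\mathrm{vol}}_{h'}+E$ forces a triangle inequality before squaring, which inflates $c_1$ and $c_2$ (at best by factors $1+\delta$ and $1+\delta^{-1}$). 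Since the admissible $h'$ is pinned by $\nu\ge 2c_3\mu h'^2$ with $c_3=\max(c_1,\sqrt{c_2})$, and since $\kappa_4$ and the final error bounds in the statement are computed from the \emph{original} constants, your argument as written does not recover the corollary as stated; it would only give a version with degraded constants. Replacing the quadrature decomposition by the regrouping-and-convexity observation removes both problems and closes the proof exactly as you intend.
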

\begin{proof}
Define $h'$, $K_2$, $M$, $N$, $q$, $Q_n$ and $Q'_m$ as in the
proof of Corollary \ref{cor2} where we have taken $c_3$ in
place of $c_1$.  Let $x_n\in Q_n$ for $n=1,2,\ldots,N$.
Since the $Q_n$ are disjoint then the $x_n$ are distinct.
Inside each large square $Q'_m$ fit $q^2$
smaller squares $Q_n$
and therefore $q^2$ points $x_n$.  Denote
$$
	\{\, x_n : x_n\in Q_m\,\} = \{\, x'_{m,j}: j=1,2,\ldots,q^2\,\}.
$$
Since $x'_{m,j}\in Q_m$ for each $j=1,\ldots,q^2$, we may view ${\cal O}_h$
as a family of $q^2$ observations of
nodes ${\cal O}^j_{h'}\colon[\dot H^2]^2\to\mathbb{R}^{2M}$
given by
\begin{plain}$$
	{\cal O}^j_{h'}(\Phi)=(\varphi_{1,j},\ldots,\varphi_{2M,j})
\quad\hbox{where}\quad
	\left[\matrix{
		\varphi_{2m-1,j}\cr
		\varphi_{2m,j}}
	\right]
	=\Phi(x'_{m,j})
$$\end{plain}%
and $m=1,2,\ldots, M$.  This leads to a family of $q^2$ independent
noisy observations $\tilde {\cal O}^j_{h'}(U(t))$.  It follows
that the average of the noisy observations
$$
	\tilde {\cal O}_{h'}(U(t))
	={1\over q^2}\sum_{j=1}^{q^2} \tilde{\cal O}^j_{h'}(U(t))
	={1\over q^2}\sum_{j=1}^{q^2} {\cal O}^j_{h'}(U(t)) + {\cal F}(t)
$$
where
$$
	{\cal F}(t)dt=(d\beta_1(t),d\beta_2(t),\ldots,d\beta_{2M})
$$
and the $\beta_k$ are one-dimensional independent Brownian motions
such that $\E(\beta_k(t))=0$ and $\E(\beta_k(t)^2)=t\sigma^2/(2q^2)$
for $k=1,2,\ldots,2M$.  Therefore, just as in the case with
finite volume elements, we have reduced the variance in the noise
term by averaging.  In particular, the noise term is now
equivalent to an $[\dot H^1]^2$-valued $Q'$-Brownian motion
with
\begin{plain}$$
	\trace[A^{1/2}Q'A^{1/2}]\le c \sigma^2 \Big({L\over h'}\Big)^2{1\over q^2}
		\le 8c c_3 \mu\nu^{-1} \sigma^2 L^2\epsilon.
$$\end{plain}%
We now define the interpolant observable
$$
	{\cal R}_{h'}=
	{1\over q^2}\sum_{j=1}^{q^2} \L_{h'}\circ {\cal O}^j_{h'}(U(t)).
$$
Since ${\cal R}_{h'}$ satisfies \eqref{R2} with the same
constants as before, then applying Corollary \ref{cor1main2}
now completes the proof.
\end{proof}

\section{Conclusions}

We have shown the continuous data assimilation algorithm
proposed in~\cite{olson2013} continues to be well posed when
the observational measurements contain errors represented by
stochastic noise.
Provided the resolution of the observational data is fine enough,
we have shown that the expected value of the difference between the
approximate solution, recovered by this data
assimilation algorithm, and the exact solution is bounded by a factor
depending on the Grashof number times the variance of the noise,
asymptotically in time.
This occurs for general interpolant operator observables satisfying either one of the approximate identity properties
\eqref{R1} or \eqref{R2}, and, in particular, for
interpolant observables based on volume
elements and nodal measurements.

In the case of Theorem \ref{main2} the resolution of the
observational data needed for the algorithm to work for
noisy measurements is roughly the same as without noise;
however, to remove the exponential dependency on the
Grashof number in the error bounds,
Corollary \ref{cor1main2},
requires increasing
the resolution by its square.  Once the resolution
needed to remove the exponential term is achieved,
no further benefits are obtained by increasing the resolution.
To benefit from additional resolution in the observational
measurements, we note that oversampling
an already  very high resolution observation, and then  by locally averaging the oversampled observation, can
produce a observation that still has sufficient resolution but
with reduced variance in the noise.
In our case, we assumed the random errors were independent; however,
this may not be the case in practical problems.
For example, Budd, Freitag and Nichols \cite{budd2010} obtain
great benefits by using adaptive filters based on
assumptions about the independence of the measurements errors
in real-world weather forecasting applications.
The effect oversampling has on reducing the errors in our
theoretical bounds is consistent with the observed effects
of filtering in applications.

Computer simulations done by Gesho \cite{gesho2013} have shown
that in the absence of measurements errors the algorithm studied
in this paper performs much better than analytical estimates would
suggest.  In the case of nodal measurements, the actual
resolution requirements for the observation density is orders
of magnitude less than the upper bounds given by the analysis.
This phenomenon, that the numerics perform much better than the
analysis, was also noted for a different data
assimilation algorithm in~\cite{olson2003} and~\cite{olson2008}.
It is plausible that in the presence of stochastic noise
the data algorithm studied here will also perform numerically
much better than our analytic bounds.
Work is underway to study the numerical performance of this
data assimilation algorithm when the observation density is
much less than our analytic bounds and to understand how the
variance in the stochastic noise numerically affects the
convergence of the approximating solution to the reference
solution over time.

\section*{Acknowledgements}
The authors would like to dedicate this work to  Professor Ciprian Foias on the occasion of his 80th birthday as a token of appreciation for his continuous support, friendship  and inspiration.
H.B.~and E.S.T.~are thankful to the IMA for its kind hospitality where this project has started during their joint visit to the IMA.
E.S.T.~is also thankful to the warm hospitality of the  Instituto
Nacional de Mate\-m\'{a}tica  Pura  e Aplicada (IMPA), where part of
this work was completed.  H.B.~was supported in part by the Simons Foundation
grant \#283308. The work of E.O.~was supported in part by
EPSRC grant EP/G007470/1 and by sabbatical leave from the University of
Nevada Reno. The work of E.S.T.\ was supported in part by  the NSF grants DMS-1009950, DMS-1109640 and DMS-1109645. Also by the CNPq-CsF grant \#401615/2012-0, through the program Ci\^encia sem Fronteiras.

\end{document}